 \newtheorem{thm}{Theorem}[section]
 \newtheorem{prop}[thm]{Proposition}
 \theoremstyle{definition}
 \newtheorem{defn}[thm]{Definition}
 \theoremstyle{remark}
 \newtheorem{rem}[thm]{Remark}
 \newtheorem*{ex}{Example}
 \numberwithin{equation}{section}
\newcommand{\ed}{\end{document}}
\begin{document}
\thispagestyle{empty}
\tikzstyle{startstop}=[rectangle, rounded corners, minimum width = 2cm, minimum height=1cm,text centered, draw = black]
\tikzstyle{io}=[trapezium, trapezium left angle=70, trapezium right angle=110, minimum width=2cm, minimum height=1cm, text centered, draw=black]
\tikzstyle{process}=[rectangle, minimum width=3cm, minimum height=1cm, text centered, draw=black]
\tikzstyle{decision}=[diamond, aspect = 3, text centered, draw=black]
\tikzstyle{arrow} = [->,>=stealth]
%
%
%
%
%
%
%
%
%

\title[3-D analytic signal associated with LCT]
 {3-D generalized analytic signal associated with linear canonical transform in Clifford biquaternion domain}

\author{Zhen Feng Cai}

\address{%
School of Science, Hubei University of Technology\\
No.28, Nanli Road, Hong-shan District\\
Wuhan. Hubei\\
China}

\email{cai-zhenfeng@qq.com}

\author{Kit Ian Kou}
\address{Faulty of Science and Technology, University of Macau\\
Taipa, Macao\\
China}
\email{kikou@um.edu.mo}
\subjclass{Primary 45P05; Secondary 30H}

\keywords{analytic signal, Clifford biquaternion, 3-D images.}


\begin{abstract}
The analytic signal is a useful mathematical tool. It separates qualitative and quantitative information of a signal in form of the local phase and local amplitude. The Clifford Fourier transform (CFT) plays a vital role in the representation of multidimensional signals. By generalizing the CFT to the Clifford linear canonical transform (CLCT), we present a new type of Clifford biquaternionic analytic signal. Due to the advantages of more freedom, the envelop detection problems of 3D images, with the help of this new analytic signal, can get a better visual appearance. Synthesis examples are presented to demonstrate these advantages.
\end{abstract}

\maketitle
\section{Introduction}
The analytic signal, which was introduced by Gabor \cite{Gabor46} and Ville \cite{Ville48}, is a powerful tool in various applications such as communication \cite{Boashash92}, radar-based object detection \cite{Levanon04}, processing of oceanic data \cite{Lilly06}, etc. It is a complex signal which derives from adding its Hilbert transform to the original real-valued signal. This complex signal also can be regarded as being constructed by suppressing all negative frequency components of the original real signal. Since one can separate the qualitative and quantitative information from the local phase and local amplitude of analytic signals, the instantaneous amplitude and phase are the hot topic of analytic signal analysis, and a critical analysis about it has been carried out by Picibono \cite{Picibono97} in 1997. During the last several decades, many new complex analytic signal theories \cite{sangwine07,Hahn11,Unser09,Said08,Felsberg01,Yang17} were well established with the development of Clifford algebras and the associated Fourier transform theory.

The linear canonical transform (LCT) - a powerful tool for optics and signal processing - was first introduced in the 1970s by Collins \cite{Collins70} and Moshinsky \cite{Moshinsky71}. It is a linear integral transform with three free parameters. Many famous transforms such as Fourier transform (FT), fractional transform (FRFT), and the Fresnel transform (FST) are all special cases of the LCT \cite{Wolf79, Ozaktas00, Pei01}. Due to the property that owning more degrees of freedom and needing similar computation cost to the FT and FRFT, the LCT has many applications such as in signal synthesis, radar system analysis, filter design and pattern recognition, etc \cite{Ozaktas00, Pei03}. Recently, with the development of the LCT, the analytic signal has been extended into the LCT domain initially by Fu and Li \cite{Fu08} and to 2D LCT domain by Xu et al \cite{Xu09}. Furthermore, Kou \cite{Kou16} generalized the analytic signal to the quaternion domain with the help of the two-sided quaternion linear canonical transform (QLCT) and got a satisfactory result by using this method to process the envelop detection problems.

In 3-D image processing domain, such as 3-D ultrasound image registration, analytic signal is a powerful tool. Zhang \cite{Zhang06} first used it to process 3-D ultrasound image with the help of its phase information. Following this, phase information of analytic signal is widely used in image processing \cite{Harput11,Maltaverne10,Rajpoot09,Belaid11}. Since local amplitude is another important part of analytic signal, Wang \cite{Wang12} introduced 3-D Clifford biquaternionic analytic signal, which is associated with Clifford Fourier transform. With the help of partial modules, it overcomes the shortcoming of losing information of the classical 1-D analytic envelop detection tools. Since the analytic signal in LCT domain supplies better results in envelop detection than that in FT domain, which was shown by Kou in \cite{Kou16}. We generalize Wang's \cite{Wang12} Clifford biquaternionic analytic signal to LCT domain, and with the help of the local amplitude, the envelopes of 3-D images are successfully detected.  Synthesis examples show that our approach presents better results than Wang's method. Furthermore, by comparing with the amplitude method of monogenic signal which is another kind of generalization of analytic signal, the powerful ability of our approach is verified.

The paper is organized as follows. First, we recall the basic knowledge about the Clifford  biquaternion and $n$ dimensional analytic signal in section 2. Section 3 is dedicated to give the definition and basic properties of CLCT, which are the keys of using the generalized analytic signal to detect the envelopes of 3-D images. In section 4, a novel approach of envelop detection based on CLCT is supplied and synthetic examples are introduced to show the advantages of this method. Finally we conclude this article in Section 5.

\section{Preliminary}
\subsection{Clifford Biquaternion}
Quaternion, which was discovered by Hamilton in 1843 and is denoted by $\mathbb{H}$, is a generalization of complex number. Each quaternion number $\mathbf{q}$ (denoted by bold letter in this paper) has a form $\mathbf{q}=q_0+q_1\textbf{i}+q_2\textbf{j}+q_3\textbf{k}$, where $q_0,q_1,q_2,q_3$ are real numbers and $\textbf{i}, \textbf{j}$, and $\textbf{k}$ are imaginary units, which satisfy $\textbf{i}^2=\textbf{j}^2=\textbf{k}^2=-1$ and $\textbf{ij}=-\textbf{ji}=\textbf{k}$. For every quaternion number $\mathbf{q}=q_0+q_1\textbf{i}+q_2\textbf{j}+q_3\textbf{k}$, the scalar part and vector part are $Sc(q):=q_0$ and $\underline{\mathbf{q}}:=q_1\textbf{i}+q_2\textbf{j}+q_3\textbf{k}$ respectively. We also use symbols $\overline{\mathbf{q}}:=q_0-\underline{\mathbf{q}}$ and $|\mathbf{q}|:=\sqrt{\mathbf{q}\overline{\mathbf{q}}}=\sqrt{q_0^2+q_1^2+q_2^2+q_3^2}$ to denote the conjugate and norm of $\mathbf{q}$.
Furthermore, for $p=1$ and $2$, the quaternion modules $L^p(\mathbb{R}^n,\mathbb{H})$ is defined by
\begin{equation*}
 L^p(\mathbb{R}^n,\mathbb{H}):=\{f|f:\mathbb{R}^n\rightarrow\mathbb{H},\|f\|_{L^p(\mathbb{R}^n,\mathbb{H})}:=\big(\int_{\mathbb{R}^n}|f(\mathbf{x})|^pd\mathbf{x}\big)^{\frac{1}{p}}<\infty\}.
\end{equation*}

Clifford algebra, which is a further generalization of quaternion, was introduced by William Kingdon Clifford in 1878 and is defined as follows.
\begin{defn}\label{def-clif}
Suppose that $\{\mathbf{e}_1,\mathbf{e}_2,\cdots,\mathbf{e}_n\}$ is an orthonormal basis of Euclidean space $\mathbb{R}^n$, and satisfies the relations $\mathbf{e}_i^2=-1$ for $i=1,2,\cdots,n,$ and $\mathbf{e}_i\mathbf{e}_j+\mathbf{e}_j\mathbf{e}_i=0$ for $1\leq i\neq j\leq n.$ Then the Clifford algebra $Cl(0,n)$ is an algebra constructed over these elements, i.e.,
\begin{equation}\label{CliffordDF}
Cl(0,n):=\bigg\{a=\sum\limits_S a_S\mathbf{e}_S: a_S\in \mathbb{R}, \mathbf{e}_S=\mathbf{e}_{j_1}\mathbf{e}_{j_2}\cdots \mathbf{e}_{j_k}\bigg\},
\end{equation}
where $S:=\{j_1,j_2,\cdots,j_k\}\subseteq\{1,2,\cdots,n\}$ with $1\leq j_1< j_2<\cdots< j_k\leq n$; or $S=\emptyset,$ and $\mathbf{e}_{\emptyset}:=1$.
\end{defn}

It can be easily found from the definition that $Cl(0,n)$ is a $2n$ dimensional real linear vector space. Let $|S|$ be the number of elements in the set $S$. For each $k\in\{1,2,\cdots,n\},$ let
\begin{equation*}
Cl^{(k)}(0,n):=\bigg\{a\in Cl(0,n): a=\sum\limits_{|S|=k} a_S\mathbf{e}_S \bigg\},
\end{equation*}
to be the $k$-vectors subset of $Cl(0,n)$.
Hence we have
\begin{equation*}
Cl(0,n)=\bigoplus_{k=0}^{n} Cl^{(k)}(0,n).
\end{equation*}
For any Clifford number $a=\sum\limits_{S}a_{S}\mathbf{e}_{S}$ in $Cl(0,n)$, it has a projection $[a]_k$ on $Cl^{(k)}(0,n)$, and can be represented by
\begin{equation*}
a=\sum\limits_{k=0}^{n} [a]_k.
\end{equation*}
For $k=0$, $[a]_0$ is named the scalar part of $a$. Furthermore $[a]_1$, $[a]_2$ and $[a]_n$, are the vector part, bivector part and pseudoscalar part of $a$ respectively.

Similar to quaternion numbers, we use symbols $\overline{a}:=\sum\limits_{S}a_{S}\overline{\mathbf{e}}_{S}$ and $|a|:=\left(\sum\limits_S|a_S|^2\right)^{1/2}$ to denote the conjugate and norm of $a$, where $\overline{\mathbf{e}}_{S}:=(-1)^{\frac{|S|(|S|+1)}{2}}\mathbf{e}_{S}$. Furthermore the Clifford  modules $L^p(\mathbb{R}^n,Cl(0,n))$ is
\begin{equation*}
 L^p(\mathbb{R}^n,Cl(0,n)):=\{f|f:\mathbb{R}^n\rightarrow Cl(0,n),\|f\|_{L^p(\mathbb{R}^n,Cl(0,n))}:=\big(\int_{\mathbb{R}^n}|f(\mathbf{x})|^pd\mathbf{x}\big)^{\frac{1}{p}}<\infty\}.
\end{equation*}
For $p=2$, and let $f,g \in L^2(\mathbb{R}^n,Cl(0,n))$, an inner product can be equipped as follows
\begin{equation}\label{innerp}
(f,g)_{L^2(\mathbb{R}^n,Cl(0,n))}:=\int_{\mathbb{R}^n}f(\mathbf{x})\overline{g(\mathbf{x})}d^n\mathbf{x}.
\end{equation}
Since Clifford algebra is generalized from quaternion algebra, they have a close relationship as follows,
\begin{thm}\cite{Girard11}\label{The-clif}
If $p+q=2m$ ($m$ is integer),the Clifford algebra $Cl(p,q)$ is the tensor product of $m$ quaternion algebras. If $p+q=2m-1$, the Clifford algebra $Cl(p,q)$ is the tensor product of $m-1$ quaternion algebra and the algebra $(1,\epsilon)$, where $\epsilon$ is the product of the $2m-1$ generators $(\epsilon=\mathbf{e}_1\mathbf{e}_2\cdots\mathbf{e}_{2m-1})$.
\end{thm}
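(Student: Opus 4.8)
The plan is to prove the decomposition by an induction that peels off one quaternion factor at a time, lowering the number of Clifford generators by two at each step, and then to match real dimensions. Throughout write $n=p+q$ and recall that $\dim_{\mathbb{R}}Cl(p,q)=2^{n}$. The inductive engine is the following construction. Starting from the generators $\mathbf{e}_1,\dots,\mathbf{e}_n$, single out the first two and form the four-dimensional subalgebra
\[
\mathcal{Q}:=\mathrm{span}_{\mathbb{R}}\{1,\mathbf{e}_1,\mathbf{e}_2,\mathbf{e}_1\mathbf{e}_2\}.
\]
Because $\mathbf{e}_1,\mathbf{e}_2$ anticommute and square to $\pm 1$, the triple $\mathbf{e}_1,\mathbf{e}_2,\mathbf{e}_1\mathbf{e}_2$ obeys quaternion-type relations, so $\mathcal{Q}$ is a quaternion algebra (isomorphic to $\mathbb{H}$ when both squares equal $-1$, as for $Cl(0,n)$, and to a split quaternion algebra otherwise). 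The obstacle to a tensor splitting is that the remaining $\mathbf{e}_3,\dots,\mathbf{e}_n$ anticommute with $\mathbf{e}_1,\mathbf{e}_2$ instead of commuting with them. To repair this I would introduce the twisted generators
\[
\mathbf{f}_j:=\mathbf{e}_j\mathbf{e}_1\mathbf{e}_2,\qquad j=3,\dots,n.
\]

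The heart of the argument---and the step most exposed to sign errors---is to verify three relations for the twisted generators: each $\mathbf{f}_j$ commutes with $\mathbf{e}_1$ and with $\mathbf{e}_2$ (hence with all of $\mathcal{Q}$), the $\mathbf{f}_j$ mutually anticommute, and $\mathbf{f}_j^2=\mathbf{e}_j^2(\mathbf{e}_1\mathbf{e}_2)^2\in\{\pm 1\}$. Each identity follows by moving one generator past the even-length block $\mathbf{e}_1\mathbf{e}_2$, which costs no net sign; for instance $\mathbf{e}_1\mathbf{f}_j=-\mathbf{e}_j\mathbf{e}_1^2\mathbf{e}_2=\mathbf{f}_j\mathbf{e}_1$. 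Consequently $\mathbf{f}_3,\dots,\mathbf{f}_n$ satisfy the defining relations of a Clifford algebra on $n-2$ generators and generate a subalgebra $\mathcal{A}'$ commuting elementwise with $\mathcal{Q}$.

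Since $\mathcal{Q}$ and $\mathcal{A}'$ commute and together contain every $\mathbf{e}_i$ (because $\mathbf{e}_j=\mathbf{f}_j(\mathbf{e}_1\mathbf{e}_2)^{-1}$, and $\mathbf{e}_1\mathbf{e}_2$ is invertible), multiplication yields a surjective algebra homomorphism $\mathcal{Q}\otimes_{\mathbb{R}}\mathcal{A}'\to Cl(p,q)$. Surjectivity forces $4\cdot\dim\mathcal{A}'\ge 2^{n}$, i.e. $\dim\mathcal{A}'\ge 2^{n-2}$, while $\mathcal{A}'$ is a quotient of the Clifford algebra on $n-2$ generators and so $\dim\mathcal{A}'\le 2^{n-2}$; hence equality holds throughout, $\mathcal{A}'$ is a genuine Clifford algebra on $n-2$ generators, and the map is an isomorphism $Cl(p,q)\cong\mathcal{Q}\otimes_{\mathbb{R}}\mathcal{A}'$. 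Iterating, the signature of $\mathcal{A}'$ may change but the tensor structure persists, and the induction applies to $\mathcal{A}'$.

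Iterating this splitting drives the proof to its two base cases. When $n=2m$ the process terminates after $m$ steps at the trivial algebra $\mathbb{R}$, exhibiting $Cl(p,q)$ as a tensor product of $m$ quaternion algebras. When $n=2m-1$ it terminates after $m-1$ steps at a Clifford algebra on a single generator, namely the two-dimensional commutative algebra spanned by $1$ and a leftover generator; unwinding the twists shows this leftover is a scalar multiple of the pseudoscalar $\epsilon=\mathbf{e}_1\cdots\mathbf{e}_{2m-1}$ (already at the first stage $\mathbf{f}_3=\mathbf{e}_3\mathbf{e}_1\mathbf{e}_2=\mathbf{e}_1\mathbf{e}_2\mathbf{e}_3$), so the final factor is exactly the algebra $(1,\epsilon)$. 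One checks independently that $\epsilon$ is central---an odd-length product of generators commutes with each $\mathbf{e}_i$---and that $\epsilon^2=\pm 1$, confirming the advertised two-dimensional factor and yielding $Cl(p,q)$ as the tensor product of $m-1$ quaternion algebras with $(1,\epsilon)$. The entire difficulty is concentrated in the twisting step; the induction, the recovery of the $\mathbf{e}_i$, and the dimension counts are routine.
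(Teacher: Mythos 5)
The paper itself offers no proof of Theorem~\ref{The-clif}: it is imported verbatim from \cite{Girard11} (and goes back to Clifford's 1878 structure theorem), so there is no in-paper argument to compare yours against. Judged on its own terms, your proof is correct, and it is the classical one: peel off the (generalized) quaternion algebra $\mathcal{Q}=\mathrm{span}_{\mathbb{R}}\{1,\mathbf{e}_1,\mathbf{e}_2,\mathbf{e}_1\mathbf{e}_2\}$, restore commutativity of the remaining generators by the twist $\mathbf{f}_j=\mathbf{e}_j\mathbf{e}_1\mathbf{e}_2$, and induct. The three sign checks you isolate are exactly the crux and they all hold (moving a generator past the even block $\mathbf{e}_1\mathbf{e}_2$ costs no sign, $\mathbf{f}_j\mathbf{f}_k=-\mathbf{f}_k\mathbf{f}_j$ for $j\neq k$, and $\mathbf{f}_j^2=-\mathbf{e}_j^2\mathbf{e}_1^2\mathbf{e}_2^2=\pm1$); the surjectivity-plus-dimension-count argument that upgrades the multiplication map $\mathcal{Q}\otimes_{\mathbb{R}}\mathcal{A}'\to Cl(p,q)$ to an isomorphism is sound; and the identification of the residual generator with $\pm\epsilon$ in the odd case is right --- your $n=3$ observation $\mathbf{f}_3=\mathbf{e}_1\mathbf{e}_2\mathbf{e}_3$ is precisely the case $Cl(0,3)\cong\mathbb{H}\otimes(1,\epsilon)$, $\epsilon^2=1$, that the paper actually uses. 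One point you concede parenthetically but that deserves emphasis, since it is the only sense in which the theorem as stated is true: ``quaternion algebra'' must be read in the generalized sense including the split case $M_2(\mathbb{R})$ (e.g.\ $Cl(2,0)\cong Cl(1,1)\cong M_2(\mathbb{R})$), because your twisting flips signatures ($\mathbf{f}_j^2=-\mathbf{e}_j^2\mathbf{e}_1^2\mathbf{e}_2^2$), so even for $Cl(0,n)$ the later factors need not be copies of $\mathbb{H}$; only the factor count and the two-dimensional center $(1,\epsilon)$ in the odd case are signature-independent.
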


According to the above theorem, one can easily find some special examples of Clifford algebras such as: complex $\mathbb{C}$ with $p=0, q=1, m=1$ and $\mathbf{e}_1=\mathbf{i}$; quaternions $\mathbb{H}$ with $p=0, q=2, m=1$ and  $\mathbf{e}_1=\mathbf{i}, \mathbf{e}_2=\mathbf{j}$.

Here we should pay an attention to a special case of Clifford algebra, $Cl(0,3)$, which will be used in this paper to detect the envelop of 3-D images. According to Theorem \ref{The-clif}, $Cl(0,3)$ corresponds $p=0, q=3,$ and is a tensor product of quaternion algebra $\mathbb{H}$ and the algebra $(1,\epsilon)$, where $\mathbf{e}_1=\epsilon\mathbf{i}, \mathbf{e}_2=\epsilon\mathbf{j},\mathbf{e}_3=\epsilon\mathbf{k}$ are the generators of $Cl(0,3)$, $\epsilon=\mathbf{e}_1\mathbf{e}_2\mathbf{e}_3$ commuting with $\mathbf{i,j,k}$ and $\epsilon^2=1$.
Thus each number in this algebra
$$A=a_0+a_1\mathbf{e}_1+a_2\mathbf{e}_2+a_3\mathbf{e}_3+a_4\mathbf{e}_1\mathbf{e}_2+a_5\mathbf{e}_3\mathbf{e}_1+a_6\mathbf{e}_2\mathbf{e}_3+a_7\mathbf{e}_1\mathbf{e}_2\mathbf{e}_3,$$
can be expressed by $A=\mathbf{p}+\epsilon\mathbf{q}$, where $\mathbf{p},\mathbf{q}$ are two quaternions and
\begin{equation*}
\begin{aligned}
\mathbf{p}&=(a_0+a_6\mathbf{i}+a_5\mathbf{j}+a_4\mathbf{k}),
\\
\mathbf{q}&=(a_7+a_1\mathbf{i}+a_2\mathbf{j}+a_3\mathbf{k}).
\end{aligned}
\end{equation*}
Here $a_0$ is the scalar part of $A$, $a_1,a_2,a_3$ correspond to the vector part, and $a_4,a_5,a_6$ are the bivector parts, while $a_7$ is the pseudoscalar part of $A$.
\begin{rem}\label{clibiquaternion}
Since $Cl(0,3)$ is a special case of geometric algebra (Clifford algebra) and isomorphic to $\mathbb{H}\bigoplus\mathbb{H}$, it is also named Clifford biquaternion\cite{Wang12,Girard11} (clifbquat for short) by some authors.
\end{rem}
 Since Clifford biquaternion algebra $Cl(0,3)$ is a special case of geometric algebra (Clifford algebra), it follows the reflecting properties of geometric algebra \cite{Vince08}, which states that under an orthogonal symmetry with respect to a plane which is perpendicular to a unit vector $a$, the reflecting of a Clifford biquaterion $A=\sum\limits_{|S|=0}^3 a_S\mathbf{e}_S$ is $A'=\sum\limits_{|S|=0}^3 (-1)^{|S|+1}a_S a\mathbf{e}_Sa$.
In particular, if $a=\mathbf{e}_1$, we can get 
\begin{equation}
A'=K_1(A)=(a_0+a_6\mathbf{i}-a_5\mathbf{j}-a_4\mathbf{k})+\epsilon(-a_7-a_1\mathbf{i}+a_2\mathbf{j}+a_3\mathbf{k});
\end{equation}
if $a=\mathbf{e}_2$, it is
\begin{equation}
A'=K_2(A)=(a_0-a_6\mathbf{i}+a_5\mathbf{j}-a_4\mathbf{k})+\epsilon(-a_7+a_1\mathbf{i}-a_2\mathbf{j}+a_3\mathbf{k});
\end{equation}
if $a=\mathbf{e}_3$,
\begin{equation}
A'=K_3(A)=(a_0-a_6\mathbf{i}-a_5\mathbf{j}+a_4\mathbf{k})+\epsilon(-a_7+a_1\mathbf{i}+a_2\mathbf{j}-a_3\mathbf{k}).
\end{equation}

\subsection{Analytic Signal in N Dimension}
Analytic signal is constructed by suppressing the negative frequency components of the original signal. So it is closely related with the Fourier transform of the original signal. In \cite{Girard11}, Girard generalized the analytic signal to $n$ dimension by introducing a new Clifford-Fourier transform as follows
\begin{defn}Given a Clifford valued function $f(\mathbf{x})\in L^1(\mathbb{R}^n,Cl(0,n))$ with $\mathbf{x}=(x_1,x_2,\cdots,x_n)$, its Clifford Fourier transform $F(u)$ is defined as
\begin{equation}
F(\mathbf{u})=\int_{\mathbb{R}^n}f(\mathbf{x})\prod\limits_{k=1}^{n}e^{-\mathbf{e}_k2\pi u_kx_k}d^n\mathbf{x}.
\end{equation}
\end{defn}

Furthermore, Girard supplied the inverse Clifford Fourier transform as follows
\begin{equation}
f(\mathbf{x})=\int_{\mathbb{R}^n}F(\mathbf{u})\prod\limits_{k=0}^{n-1}e^{\mathbf{e}_{n-k}2\pi u_{n-k}x_{n-k}}d^{n}\mathbf{u}.
\end{equation}

With the help of this CFT, Girard introduced the $n$ dimensional analytic signal $f_A(\mathbf{x})$ which corresponds to $f(\mathbf{x})$  by the following steps
\begin{equation}
F_A(\mathbf{u})=\prod\limits_{k=1}^{n}[1+sign(u_k)]F(\mathbf{u}),
\end{equation}
\begin{equation}
f_A(\mathbf{x})=\int_{\mathbb{R}^n}F_A(\mathbf{u})\prod\limits_{k=0}^{n-1}e^{\mathbf{e}_{n-k}2\pi u_{n-k}x_{n-k}}d^n\mathbf{u},
\end{equation}
where $sign(u_k)$ is the classical signum function and $F(\mathbf{u})$ is the CFT of original function $f(\mathbf{x})$.

It can be easily verified that when $n=1$, CFT degenerates to the classical FT and the corresponding analytic signal is $f_A(t)$. When $n=2$, the CFT turns to the right-sided quaternion Fourier transform and the analytic signal is quaternionic valued function $f_A(x,y)$.

Since LCT is a generalization of FT, one can generalize the analytic signal by replacing the FT by LCT, which was introduced by Fu in \cite{Fu08}. One can connect these two approaches to generalize the analytic signal to $n$ dimensional LCT domain.

Monogenic signal which was introduced by  Felsberg \cite{Felsberg01} is another generalization of analytic signal. Since we will compare our method with that of monogenic signal, the basic knowledge about monogenic signal will be reviewed in the following.

\begin{defn}\cite{Yang17}(Monogenic signal) For $f\in L^2(\mathbb{R}^n,Cl(0,n))$, the monogenic signal $f_M\in L^2(\mathbb{R}^n,Cl(0,n))$ is defined by
\begin{equation}\label{mono-defn}
f_M(\underline{x}):=f(\underline{x})+H[f](\underline{x}),
\end{equation}
where $H[f]$ is the isotropic Hilbert transform of $f$ defined by
\begin{equation}
\begin{aligned}
\label{hilber-defn}
H[f](\underline{x}):&=p.v.\frac{1}{\omega_n}\int_{\mathbf{R}^n}\frac{\overline{\underline{x}-\underline{t}}}{|\underline{x}-\underline{t}|^{n+1}}f(\underline{t})d\underline{t}
\\
&=\lim\limits_{\epsilon\rightarrow0^+}\frac{1}{\omega_n}\int_{{|\underline{x}-\underline{t}}|>\epsilon}\frac{\overline{\underline{x}-\underline{t}}}{|\underline{x}-\underline{t}|^{n+1}}f(\underline{t})d\underline{t}
\\
&=-\sum\limits_{j=1}^{n}R_j(f)(\underline{x})\mathbf{e}_j.
\end{aligned}
\end{equation}
Furthermore,
\begin{equation*}
R_j(f)(\underline{x}):=\lim\limits_{\epsilon\rightarrow0^+}\frac{1}{\omega_n}\int_{{|\underline{x}-\underline{t}}|>\epsilon}\frac{x_j-t_j}{|\underline{x}-\underline{t}|^{n+1}}f(\underline{t})d\underline{t}
\end{equation*}
and $\omega_n=\frac{2\pi^{\frac{n+1}{2}}}{\Gamma(\frac{n+1}{2})}$.
\end{defn}

\section{Clifford Linear Canonical Transform}
Since LCT is a generalization of many famous integral transform and has more degrees of freedom, it is natural to generalize the LCT to Clifford domain. In the last decade, many mathematicians tried different approaches to develop this kind of generalization. In \cite{Kou13}, Kou introduced the CLCT of a function $f\in L^1(\mathbb{R}^n,Cl(0,n))$ and investigated the maxima of energy preservation problem. Yang \cite{Yang14} paid attention to the CLCT with the kernel constituted by the complex unit $I$. Here, we will introduce a new type of generalization as follows
\begin{defn} (Right-sided CLCT)
Let $\Lambda$: $A_k=\left(
\begin{array}{cc}
a_k & b_k \\
c_k & d_k \\
\end{array}
\right)\in \mathbb{R}^{2\times2}$
($k=1,2,\cdots,n$) be a set of parameter matrices  with $det(A_k)=1$ and $b_k\neq0$, the right-sided CLCT of a signal $f\in L^1(\mathbb{R}^n,CL(0,n))$ is a Clifford valued function $\mathscr{L}^r_{\Lambda}(f): \mathbb{R}^n\rightarrow CL(0,n)$ defined as follows
\begin{equation}\label{def-rCLCT}
\mathscr{L}^r_{\Lambda}(f)(\mathbf{u}):=\int_{\mathbb{R}^n}f(\mathbf{x})\prod\limits_{k=1}^{n} K^{\mathbf{e}_k}_{A_k}(x_k,u_k)d^n\mathbf{x},
\end{equation}
where $K^{\mathbf{e}_k}_{A_k}(x_k,u_k)$ is the kernel of the CLCT defined as
\begin{equation}
K^{\mathbf{e}_k}_{A_k}(x_k,u_k):=\frac{1}{\sqrt{\mathbf{e}_k2\pi b_k}}e^{\mathbf{e}_k(\frac{a_k}{2b_k}x_k^2-\frac{1}{b_k}x_ku_k+\frac{d_k}{2b_k}u_k^2)}.
\end{equation}
\end{defn}

Due to the non-commutativity of Clifford algebra, there is a different type of CLCT, left-sided CLCT, which can be defined as follows
\begin{defn} (Left-sided CLCT)
Let $\Lambda$: $A_k=\left(
\begin{array}{cc}
a_k & b_k \\
c_k & d_k \\
\end{array}
\right)\in \mathbb{R}^{2\times2}$
($k=1,2,\cdots,n$) be a set of parameter matrices  with $det(A_k)=1$ and $b_k\neq0$, the left-sided CLCT of a signal $f\in L^1(\mathbb{R}^n,Cl(0,n))$ is a Clifford valued function $\mathscr{L}^l_{\Lambda}(f): \mathbb{R}^n\rightarrow Cl(0,n)$ defined as follows
\begin{equation}
\mathscr{L}^l_{\Lambda}(f)(\mathbf{u}):=\int_{\mathbb{R}^n}\prod\limits_{k=1}^{n} K^{\mathbf{e}_k}_{A_k}(x_k,u_k)f(\mathbf{x})d^n\mathbf{x}.
\end{equation}
\end{defn}

Here we focus on the right-sided CLCT in the following content. Furthermore, we can simulate the approach Hitzer used in \cite{Hitzer14} to introduce a two-sided Clifford LCT with two square roots of $-1$ in $Cl(0,n)$, which is out of our scope.

\begin{rem}It can be easily verified that when the dimension $n=1$, the CLCT degenerates to the classical LCT. When $n=2$, the CLCT turns to the right-sided quaternion linear canonical transform.
\end{rem}

Now let us investigate an example to see how the CLCT works:
\begin{ex}
Considering the right-sided CLCT of $\prod\limits_{k=0}^{n-1} K^{-\mathbf{e}_{n-k}}_{A_{n-k}}(x_{n-k},v_{n-k})$.
\end{ex}
Since $f(t)$ is product of $n$ LCT kernel functions, we can integrate the functions separately when taking the CLCT.
That is
\begin{equation}\label{ExamK}
\begin{split}
&\mathscr{L}^r_{\Lambda}(f)(\mathbf{u})=\int_{\mathbb{R}^n}\prod\limits_{k=0}^{n-1} K^{-\mathbf{e}_{n-k}}_{A_{n-k}}(x_{n-k},v_{n-k})\prod\limits_{k=1}^{n} K^{\mathbf{e}_k}_{A_k}(x_k,u_k)d^n\mathbf{x}
\\
&=\int_{\mathbb{R}^{n-1}}\prod\limits_{k=0}^{n-2} K^{-\mathbf{e}_{n-k}}_{A_{n-k}}(x_{n-k},v_{n-k})\int_{\mathbb{R}}K^{-\mathbf{e}_{1}}_{A_{1}}(x_{1},v_{1})K^{\mathbf{e}_{1}}_{A_{1}}(x_{1},u_{1})dx_1
\\
&\cdot\prod\limits_{k=2}^{n} K^{\mathbf{e}_k}_{A_k}(x_k,u_k)d^{n-1}\mathbf{x},
\end{split}
\end{equation}
where
\begin{equation}\label{Delta}
\begin{split}
\int_{\mathbb{R}}K^{-\mathbf{e}_{1}}_{A_{1}}(x_{1},v_{1})K^{\mathbf{e}_{1}}_{A_{1}}(x_{1},u_{1})&dx_1=\int_{\mathbb{R}}\frac{1}{\sqrt{-\mathbf{e}_12\pi b_1}}e^{-\mathbf{e}_1(\frac{a_1}{2b_1}x_1^2-\frac{1}{b_1}x_1v_1+\frac{d_1}{2b_1}v_1^2)}
\\
&\cdot\frac{1}{\sqrt{\mathbf{e}_12\pi b_1}}e^{\mathbf{e}_1(\frac{a_1}{2b_1}x_1^2-\frac{1}{b_1}x_1u_1+\frac{d_1}{2b_1}u_1^2)}dx_1
\\
&=\frac{1}{2\pi}\int_{\mathbb{R}}e^{-\mathbf{e}_1\frac{x_1}{b_1}(u_1-v_1)}d(\frac{x_1}{b_1})e^{-\mathbf{e}_1\frac{d_1}{2b_1}(u_1^2-v_1^2)}
\\
&=\delta(u_1-v_1).
\end{split}
\end{equation}
The last step of the above equation comes from the relationship of Fourier transform $\widehat{1}(\omega)=2\pi \delta(\omega)$.
Applying result (\ref{Delta}) to the right side of equation (\ref{ExamK}), one can derive
$$\mathscr{L}^r_{\Lambda}\big(\prod\limits_{k=0}^{n-1} K^{-\mathbf{e}_{n-k}}_{A_{n-k}}(x_{n-k},v_{n-k})\big)(\mathbf{u})=\prod\limits_{k=1}^{n}\delta(u_k-v_k).$$

In the following content, we turn to investigate the main properties of CLCT.
\begin{prop} (Left linearity) Let $\alpha \in Cl(0,n), \beta \in Cl(0,n)$, for $f_1(\mathbf{x}),f_2(\mathbf{x})\in L^1(\mathbb{R}^n, Cl(0,n))$, we have $\mathscr{L}^r_{\Lambda}(\alpha f_1+\beta f_2)(\mathbf{u})=\alpha\mathscr{L}^r_{\Lambda} (f_1)(\mathbf{u})+\beta \mathscr{L}^r_{\Lambda} (f_2)(\mathbf{u})$.
\end{prop}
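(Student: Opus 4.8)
The plan is to prove this identity directly from the defining integral (\ref{def-rCLCT}), exploiting the crucial feature of the right-sided convention: the function factor sits to the \emph{left} of the kernel product, so a constant left multiplier never has to commute past the kernel. First I would substitute $\alpha f_1+\beta f_2$ into the definition, obtaining
\begin{equation*}
\mathscr{L}^r_{\Lambda}(\alpha f_1+\beta f_2)(\mathbf{u})=\int_{\mathbb{R}^n}\big(\alpha f_1(\mathbf{x})+\beta f_2(\mathbf{x})\big)\prod_{k=1}^{n}K^{\mathbf{e}_k}_{A_k}(x_k,u_k)\,d^n\mathbf{x}.
\end{equation*}
Using distributivity of the Clifford product over addition in the integrand, together with the $\mathbb{R}$-linearity of the integral, this splits into the sum of two integrals, one carrying the factor $\alpha f_1$ and the other $\beta f_2$.

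Next I would factor the constants out to the left. By associativity of the Clifford product, $\alpha f_1(\mathbf{x})\prod_{k=1}^n K^{\mathbf{e}_k}_{A_k}(x_k,u_k)=\alpha\big(f_1(\mathbf{x})\prod_{k=1}^n K^{\mathbf{e}_k}_{A_k}(x_k,u_k)\big)$, and since $\alpha$ does not depend on the integration variable it may be moved outside the integral. The one point that genuinely needs care, and the only place non-commutativity enters, is justifying that left multiplication by a fixed Clifford number commutes with integration. This follows by expanding $\alpha=\sum_T\alpha_T\mathbf{e}_T$ and the integrand component-wise as $\sum_S g_S(\mathbf{x})\mathbf{e}_S$: because the integral acts separately on each real component $g_S$ and the basis products $\mathbf{e}_T\mathbf{e}_S$ are constants, one obtains $\int\alpha g=\alpha\int g$.

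Applying the same reasoning to $\beta f_2$ then yields
\begin{equation*}
\mathscr{L}^r_{\Lambda}(\alpha f_1+\beta f_2)(\mathbf{u})=\alpha\mathscr{L}^r_{\Lambda}(f_1)(\mathbf{u})+\beta\mathscr{L}^r_{\Lambda}(f_2)(\mathbf{u}),
\end{equation*}
which is the claim. I do not expect any real obstacle: the statement is essentially the linearity of the Clifford-valued integral combined with associativity, and its whole content is the restriction to \emph{left} scalars. The analogous factoring on the right would fail in general, since a constant placed to the right of $f(\mathbf{x})$ cannot be pulled past the kernel $\prod_{k=1}^n K^{\mathbf{e}_k}_{A_k}(x_k,u_k)$ without incurring the non-commutativity of $Cl(0,n)$. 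For convergence one only notes that $\alpha f_1+\beta f_2\in L^1(\mathbb{R}^n,Cl(0,n))$ whenever $f_1,f_2$ are, so every integral above is well defined.
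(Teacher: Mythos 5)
Your proof is correct and follows the same route the paper takes: the paper disposes of this proposition in one line (``easily verified by the linearity of integral''), and your argument is simply that idea written out in full, with the useful extra care of justifying component-wise that a constant left Clifford factor commutes with integration and of noting why the left-sided placement of $f$ relative to the kernel is what makes the statement hold.
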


This proposition can be easily verified by the linearity of integral.

\begin{prop}(Translation) Let $\mathbf{\bm{\alpha}}=(\alpha_1,0,\cdots,0,\alpha_n)$, for $f(\mathbf{x})\in L^1(\mathbb{R}^n,\mathbb{R})$, we have
$
\mathscr{L}^r_{\Lambda}(f(\mathbf{x}-\bm{\alpha}))(\mathbf{u})=e^{\mathbf{e}_1(c_1u_1\alpha_1)}e^{-\mathbf{e}_1\frac{a_1c_1}{2}\alpha_1^2}\mathscr{L}^r_{\Lambda}(f)(u_1-a_1\alpha_1,u_2,\cdots,u_n-a_n\alpha_n)e^{\mathbf{e}_n(c_nu_n\alpha_n)}e^{-\mathbf{e}_n\frac{a_nc_n}{2}\alpha_n^2}.
$
\end{prop}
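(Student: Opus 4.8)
The plan is to collapse the whole statement onto a single-variable identity for the two kernel factors that the shift actually touches, and then to track the ordering forced by the right-sided product. First I would substitute $\mathbf{y}=\mathbf{x}-\bm{\alpha}$ in the defining integral \eqref{def-rCLCT}, so that $d^n\mathbf{x}=d^n\mathbf{y}$ and each argument $x_k$ becomes $y_k+\alpha_k$. Because $\bm{\alpha}=(\alpha_1,0,\dots,0,\alpha_n)$ has $\alpha_k=0$ for $2\le k\le n-1$, only the $k=1$ and $k=n$ kernel factors change, while the middle factors $K^{\mathbf{e}_k}_{A_k}(y_k,u_k)$ are left exactly as they stand. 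This reduction is precisely what prevents the anticommutativity of distinct $\mathbf{e}_k$ from ever entering the argument.

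The computational heart is the single-variable identity, valid for $k\in\{1,n\}$,
$$K^{\mathbf{e}_k}_{A_k}(y_k+\alpha_k,u_k)=e^{\mathbf{e}_k(c_ku_k\alpha_k)}e^{-\mathbf{e}_k\frac{a_kc_k}{2}\alpha_k^2}\,K^{\mathbf{e}_k}_{A_k}(y_k,u_k-a_k\alpha_k).$$
To establish it I would expand the quadratic exponent of $K^{\mathbf{e}_k}_{A_k}(y_k+\alpha_k,u_k)$ and subtract the exponent of $K^{\mathbf{e}_k}_{A_k}(y_k,u_k-a_k\alpha_k)$; every $y_k$-dependent term cancels, and the surviving remainder collapses to $c_ku_k\alpha_k-\tfrac{a_kc_k}{2}\alpha_k^2$ exactly upon substituting $a_kd_k-1=b_kc_k$, i.e.\ the constraint $\det(A_k)=1$. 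Since the exponents, the leftover phase, and even the prefactor $(\mathbf{e}_k2\pi b_k)^{-1/2}$ all live in the commutative plane $\mathrm{span}\{1,\mathbf{e}_k\}$, the exponentials split as a product and may be placed on either side of the shifted kernel.

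Finally I would reassemble the product under the integral, placing the $\mathbf{e}_1$-phase on the left of $K^{\mathbf{e}_1}_{A_1}(y_1,u_1-a_1\alpha_1)$ and the $\mathbf{e}_n$-phase on the right of $K^{\mathbf{e}_n}_{A_n}(y_n,u_n-a_n\alpha_n)$. The $\mathbf{e}_n$-phase is then already at the extreme right of the kernel chain and, being independent of $\mathbf{y}$, pulls straight out of the integral. The $\mathbf{e}_1$-phase sits between $f(\mathbf{y})$ and the kernels; here the hypothesis $f\in L^1(\mathbb{R}^n,\mathbb{R})$ is essential, for a scalar-valued $f(\mathbf{y})$ commutes with $e^{\mathbf{e}_1(\cdots)}$, letting me slide the phase leftward past $f$ and out of the integral. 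What remains inside is $\int_{\mathbb{R}^n}f(\mathbf{y})K^{\mathbf{e}_1}_{A_1}(y_1,u_1-a_1\alpha_1)\prod_{k=2}^{n-1}K^{\mathbf{e}_k}_{A_k}(y_k,u_k)\,K^{\mathbf{e}_n}_{A_n}(y_n,u_n-a_n\alpha_n)\,d^n\mathbf{y}$, which is by definition $\mathscr{L}^r_{\Lambda}(f)(u_1-a_1\alpha_1,u_2,\dots,u_n-a_n\alpha_n)$, yielding the asserted formula.

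The only genuinely delicate point is the commutator bookkeeping, not the integration. One must never attempt to move the $\mathbf{e}_1$-phase through the middle kernels $K^{\mathbf{e}_k}_{A_k}$ with $k\ge 2$, since $\mathbf{e}_1\mathbf{e}_k=-\mathbf{e}_k\mathbf{e}_1$ would obstruct it; the proof succeeds precisely because that phase need only pass the scalar $f$, the $\mathbf{e}_n$-phase begins at the far right, and the vanishing middle components of $\bm{\alpha}$ guarantee that no competing phase factors are ever produced. The quadratic algebra underlying the kernel identity is routine once $\det(A_k)=1$ is invoked, so it is the placement and extraction of the two surviving phases that requires the care.
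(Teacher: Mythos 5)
Your proposal is correct and follows essentially the same route as the paper's own proof: substitute the shift, rewrite the first and last kernels as phase factors times shifted kernels (using $\det(A_k)=1$ via $a_kd_k-1=b_kc_k$), and exploit the real-valuedness of $f$ to slide the $\mathbf{e}_1$-phase out on the left while the $\mathbf{e}_n$-phase exits on the right. Your explicit remark that the $\mathbf{e}_1$-phase must never be pushed through the middle kernels is a point the paper leaves implicit, but the underlying argument is identical.
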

\begin{proof}
According to definition of right-sided CLCT, one can get
\begin{equation}\label{trans1}
\mathscr{L}^r_{\Lambda}(f(\mathbf{x}-\bm{\alpha}))(\mathbf{u})=\int_{\mathbb{R}^n}f(x_1-\alpha_1,x_2,\cdots,x_{n-1},x_n-\alpha_n)\prod\limits_{k=1}^{n} K^{\mathbf{e}_k}_{A_k}(x_k,u_k)d^n\mathbf{x}.
\end{equation}
Let $t_1=x_1-\alpha_1$, $t_n=x_n-\alpha_n$, the above integral turns to a new integral about variables $t_1,x_2,\cdots,x_{n-1},t_n$, where the first LCT kernel becomes
\begin{equation*}
\begin{aligned}
K^{\mathbf{e}_1}_{A_1}(x_1,u_1)&=\frac{1}{\sqrt{\mathbf{e}_12\pi b_1}}e^{\mathbf{e}_1[\frac{a_1}{2b_1}(t_1+\alpha_1)^2-\frac{1}{b_1}(t_1+\alpha_1)u_1+\frac{d_1}{2b_1}u_1^2]}
\\
&=\frac{1}{\sqrt{\mathbf{e}_12\pi b_1}}e^{\mathbf{e}_1[\frac{a_1}{2b_1}t_1^2+\frac{a_1}{b_1}t_1\alpha_1+\frac{a_1}{2b_1}\alpha_1^2-\frac{t_1u_1}{b_1}-\frac{u_1\alpha_1}{b_1}+\frac{d_1}{2b_1}u_1^2]}
\\
&=e^{\mathbf{e}_1(c_1u_1\alpha_1)}e^{-\mathbf{e}_1\frac{a_1c_1}{2}\alpha_1^2}\frac{1}{\sqrt{\mathbf{e}_12\pi b_1}}e^{\mathbf{e}_1[\frac{a_1}{2b_1}t_1^2-\frac{t_1}{b_1}(u_1-a_1\alpha_1)+\frac{d_1}{2b_1}(u_1-a_1\alpha_1)^2]},
\end{aligned}
\end{equation*}
and the last LCT kernel turns to
\begin{equation*}
K^{\mathbf{e}_n}_{A_n}(x_n,u_n)=e^{\mathbf{e}_n(c_nu_n\alpha_n)}e^{-\mathbf{e}_n\frac{a_nc_n}{2}\alpha_n^2}\frac{1}{\sqrt{\mathbf{e}_n2\pi b_n}}e^{\mathbf{e}_n[\frac{a_n}{2b_n}t_n^2-\frac{t_n}{b_n}(u_n-a_n\alpha_n)+\frac{d_n}{2b_n}(u_n-a_n\alpha_n)^2]}.
\end{equation*}
Since $f(\mathbf{x})$ is real valued function in this case and it can interchange the position with $K^{\mathbf{e}_1}_{A_1}(x_1,u_1)$ in equation (\ref{trans1}) freely. Hence equation (\ref{trans1}) derives
\begin{equation*}
\begin{aligned}
\mathscr{L}^r_{\Lambda}(f(\mathbf{x}-\bm{\alpha}))(\mathbf{u})=&e^{\mathbf{e}_1(c_1u_1\alpha_1)}e^{-\mathbf{e}_1\frac{a_1c_1}{2}\alpha_1^2}\mathscr{L}^r_{\Lambda}(f)(u_1-a_1\alpha_1,u_2,\cdots,u_n-a_n\alpha_n)
\\
&e^{\mathbf{e}_n(c_nu_n\alpha_n)}e^{-\mathbf{e}_n\frac{a_nc_n}{2}\alpha_n^2}.
\end{aligned}
\end{equation*}
\end{proof}
%

\begin{prop}(Scaling)Let $\sigma_1,\sigma_2,\cdots,\sigma_n>0$, for $f(\mathbf{x})\in L^1(\mathbb{R}^n,Cl(0,n))$, we have
$$
\mathscr{L}^r_{\Lambda}(f(\sigma_1x_1,\sigma_2x_2,\cdots,\sigma_nx_n)(\mathbf{u})=\frac{1}{\sqrt{\prod\limits_{k=1}^{n}\sigma_k}}\mathscr{L}^r_{\Lambda'}(f)(\mathbf{u}),
$$
where $\Lambda'$: $B_k=\left(
\begin{array}{cc}
a_k/\sigma_k & b_k\sigma_k \\
c_k/\sigma_k & d_k\sigma_k \\
\end{array}
\right).$
\end{prop}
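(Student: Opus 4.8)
The plan is to reduce the dilated transform to an ordinary CLCT by the substitution $y_k=\sigma_k x_k$, exactly in the spirit of the translation proposition, except that here the dilation reshapes each parameter matrix $A_k$ into a new matrix $B_k$ rather than producing a modulation factor. First I would write out the definition \eqref{def-rCLCT} applied to the dilated signal,
\[
\mathscr{L}^r_{\Lambda}(f(\sigma_1 x_1,\ldots,\sigma_n x_n))(\mathbf{u})=\int_{\mathbb{R}^n} f(\sigma_1 x_1,\ldots,\sigma_n x_n)\prod_{k=1}^n K^{\mathbf{e}_k}_{A_k}(x_k,u_k)\,d^n\mathbf{x},
\]
and then change variables componentwise via $x_k=y_k/\sigma_k$, $dx_k=dy_k/\sigma_k$. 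This is legitimate because each $\sigma_k>0$, and it produces a global Jacobian factor $\prod_{k=1}^n \sigma_k^{-1}$ while turning each kernel into a function of $y_k$.

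The crucial algebraic step is to recognize the transplanted kernel $K^{\mathbf{e}_k}_{A_k}(y_k/\sigma_k,u_k)$ as, up to a positive real scalar, the kernel of a new LCT matrix. Substituting $x_k=y_k/\sigma_k$ into the exponent $\frac{a_k}{2b_k}x_k^2-\frac{1}{b_k}x_k u_k+\frac{d_k}{2b_k}u_k^2$ and matching term-by-term against the canonical exponent $\frac{a_k'}{2b_k'}y_k^2-\frac{1}{b_k'}y_k u_k+\frac{d_k'}{2b_k'}u_k^2$ forces $b_k'=\sigma_k b_k$, $a_k'=a_k/\sigma_k$, and $d_k'=\sigma_k d_k$. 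The remaining entry is then pinned down by the normalization $\det B_k=1$: since $a_k' d_k'=a_k d_k$, one gets $b_k' c_k' = a_k d_k-1 = b_k c_k$, hence $c_k'=c_k/\sigma_k$, which matches the matrix $B_k$ in the statement. Comparing the normalizing constants gives $\frac{1}{\sqrt{\mathbf{e}_k 2\pi b_k}}=\sqrt{\sigma_k}\,\frac{1}{\sqrt{\mathbf{e}_k 2\pi b_k'}}$, so that
\[
K^{\mathbf{e}_k}_{A_k}(y_k/\sigma_k,u_k)=\sqrt{\sigma_k}\,K^{\mathbf{e}_k}_{B_k}(y_k,u_k).
\]

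Finally I would collect the scalars. Because each $\sqrt{\sigma_k}$ is a positive real, it is central in $Cl(0,n)$ and may be pulled out of the ordered product $\prod_{k=1}^n K^{\mathbf{e}_k}_{B_k}(y_k,u_k)$ without disturbing the order of the factors; this point matters precisely because the kernels for distinct $k$ carry distinct $\mathbf{e}_k$ and do not commute. Multiplying the $n$ factors $\sqrt{\sigma_k}$ against the Jacobian $\prod_k \sigma_k^{-1}$ leaves the single prefactor $(\prod_k \sigma_k)^{-1/2}$, and the surviving integral is $\mathscr{L}^r_{\Lambda'}(f)(\mathbf{u})$ by definition, giving the claimed identity.

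The computation is essentially routine, so the only genuine care required is twofold: confirming that the determinant normalization yields a consistent value $c_k'=c_k/\sigma_k$ rather than an over-determined system, and checking that the non-commutativity of $Cl(0,n)$ causes no trouble when the scalars are extracted. The latter is what I expect to flag as the main point, and it is resolved by the observation that the dilation factors are positive reals and therefore central; no sign or orientation difficulty arises since every $\sigma_k>0$.
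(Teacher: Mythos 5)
Your proposal is correct and follows essentially the same route as the paper: the substitution $t_k=\sigma_k x_k$, rewriting each transplanted kernel as $\sqrt{\sigma_k}\,K^{\mathbf{e}_k}_{B_k}(t_k,u_k)$, and absorbing the Jacobian $\prod_k \sigma_k^{-1}$ into the prefactor $\bigl(\prod_k\sigma_k\bigr)^{-1/2}$. Your two extra remarks — that $c_k'=c_k/\sigma_k$ is forced consistently by $\det B_k=1$ (the kernel itself never sees $c_k$), and that the positive scalars are central so non-commutativity causes no ordering issue — are sound refinements of what the paper leaves implicit.
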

\begin{proof}According to equation (\ref{def-rCLCT}), one can find
\begin{equation}\label{Escaling}
\begin{split}
\mathscr{L}^r_{\Lambda}(f(\sigma_1x_1,\sigma_2x_2,\cdots,\sigma_nx_n))(\mathbf{u})=&\int_{\mathbb{R}^n}f(\sigma_1x_1,\sigma_2x_2,\cdots,\sigma_nx_n)
\\
&\cdot\prod\limits_{k=1}^{n} K^{\mathbf{e}_k}_{A_k}(x_k,u_k)d^n\mathbf{x}.
\end{split}
\end{equation}
Let $t_k=\sigma_kx_k\quad(k=1,\cdots,n)$, each kernel $K^{\mathbf{e}_k}_{A_k}(x_k,u_k)$ turns to
\begin{equation*}
\begin{aligned}
K^{\mathbf{e}_k}_{A_k}(x_k,u_k)&=\frac{1}{\sqrt{\mathbf{e}_k2\pi b_k}}e^{\mathbf{e}_k[\frac{a_k}{2b_k}(\frac{t_k}{\sigma_k})^2-\frac{1}{b_k}(\frac{t_k}{\sigma_k})u_k+\frac{d_k}{2b_k}u_k^2]}
\\
&=\frac{\sqrt{{\sigma_k}}}{\sqrt{\mathbf{e}_k2\pi b_k{\sigma_k}}}e^{\mathbf{e}_k[\frac{a_k/{\sigma_k}}{2b_k{\sigma_k}}t_k^2-\frac{1}{b_k{\sigma_k}}t_ku_k+\frac{d_k{\sigma_k}}{2b_k{\sigma_k}}u_k^2]}.
\end{aligned}
\end{equation*}
Hence the left side of equation (\ref{Escaling}) becomes
\begin{equation*}
\begin{split}
\mathscr{L}^r_{\Lambda}(f(t_1,t_2,\cdots,t_n))(\mathbf{u})&=\int_{\mathbb{R}^n}f(t_1,t_2,\cdots,t_n)
\\
&\cdot\prod\limits_{k=1}^{n} \frac{\sqrt{{\sigma_k}}}{\sqrt{\mathbf{e}_k2\pi b_k{\sigma_k}}}e^{\mathbf{e}_k[\frac{a_k/{\sigma_k}}{2b_k{\sigma_k}}t_k^2-\frac{1}{b_k{\sigma_k}}t_ku_k+\frac{d_k{\sigma_k}}{2b_k{\sigma_k}}u_k^2]}d{\frac{t_1}{\sigma_1}}\cdots d{\frac{t_n}{\sigma_n}}
\\
&=\frac{1}{\sqrt{\prod\limits_{k=1}^{n}\sigma_k}}\int_{\mathbb{R}^n}f(t_1,t_2,\cdots,t_n)\prod\limits_{k=1}^{n} \frac{1}{\sqrt{\mathbf{e}_k2\pi b_k{\sigma_k}}}
\\
&\cdot e^{\mathbf{e}_k[\frac{a_k/{\sigma_k}}{2b_k{\sigma_k}}t_k^2-\frac{1}{b_k{\sigma_k}}t_ku_k+\frac{d_k{\sigma_k}}{2b_k{\sigma_k}}u_k^2]}d^n\mathbf{t},
\end{split}
\end{equation*}
which is the desired result.
\end{proof}

\begin{prop}\label{Partial}(Partial derivative) For $f(\mathbf{x})$ and $\frac{\partial f(\mathbf{x})}{\partial x_1},\frac{\partial f(\mathbf{x})}{\partial x_n}\in L^1(\mathbb{R}^n,\mathbb{R})$, we have
\begin{equation}\label{Partial1}
\mathscr{L}^r_{\Lambda}(\frac{\partial f(\mathbf{x})}{\partial x_1})(\mathbf{u})=(a_1\frac{\partial}{\partial u_1}-\mathbf{e}_1c_1u_1)\mathscr{L}^r_{\Lambda}(f)(\mathbf{u}),
\end{equation}
and
\begin{equation}\label{Partialn}
\mathscr{L}^r_{\Lambda}(\frac{\partial f(\mathbf{x})}{\partial x_n})(\mathbf{u})=\mathscr{L}^r_{\Lambda}(f)(\mathbf{u})(a_n\frac{\partial}{\partial u_n}-\mathbf{e}_nc_nu_n).
\end{equation}
\end{prop}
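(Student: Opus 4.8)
The plan is to establish both identities by integrating by parts in the single variable being differentiated, and then to recognize the resulting integrand as the stated right-hand side operator applied to $\mathscr{L}^r_{\Lambda}(f)(\mathbf{u})$. The calculus is easy because the $x_k$- and $u_k$-derivatives of the kernel $K^{\mathbf{e}_k}_{A_k}(x_k,u_k)$ are each just a scalar multiple of $\mathbf{e}_k$ times the kernel itself, so the whole computation reduces to bookkeeping of these scalar factors together with a careful tracking of where the Clifford unit $\mathbf{e}_k$ sits.

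First I would treat (\ref{Partial1}). Writing out $\mathscr{L}^r_{\Lambda}(\partial f/\partial x_1)(\mathbf{u})$ from the definition (\ref{def-rCLCT}) and integrating by parts in $x_1$, the boundary term vanishes: since $f,\ \partial f/\partial x_1\in L^1$ the slice $f(\cdot,x_2,\dots,x_n)$ is absolutely continuous with integrable derivative, hence tends to $0$ as $x_1\to\pm\infty$ for a.e.\ fixed $(x_2,\dots,x_n)$, while $|K^{\mathbf{e}_1}_{A_1}|$ stays constant. This leaves $-\int_{\mathbb{R}^n} f\,\frac{\partial K^{\mathbf{e}_1}_{A_1}}{\partial x_1}\prod_{k=2}^n K^{\mathbf{e}_k}_{A_k}\,d^n\mathbf{x}$. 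A direct differentiation gives $\frac{\partial K^{\mathbf{e}_1}_{A_1}}{\partial x_1}=\mathbf{e}_1\frac{a_1x_1-u_1}{b_1}K^{\mathbf{e}_1}_{A_1}$ and $\frac{\partial K^{\mathbf{e}_1}_{A_1}}{\partial u_1}=\mathbf{e}_1\frac{d_1u_1-x_1}{b_1}K^{\mathbf{e}_1}_{A_1}$, where the scalar prefactors commute through everything and $\mathbf{e}_1$ commutes with its own kernel $K^{\mathbf{e}_1}_{A_1}$. Because $f$ is real-valued it commutes with $\mathbf{e}_1$, so the $\mathbf{e}_1$ produced by the differentiation can be carried to the far left, matching the left-multiplication structure of $(a_1\frac{\partial}{\partial u_1}-\mathbf{e}_1 c_1 u_1)$. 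I would then expand $(a_1\frac{\partial}{\partial u_1}-\mathbf{e}_1 c_1 u_1)\mathscr{L}^r_{\Lambda}(f)(\mathbf{u})$ using the $u_1$-derivative of the kernel and collect the scalar coefficient $\frac{a_1 d_1 u_1-a_1 x_1}{b_1}-c_1 u_1$; invoking $\det(A_1)=a_1 d_1-b_1 c_1=1$ this collapses to $\frac{u_1-a_1 x_1}{b_1}$, which is exactly the coefficient produced by the integration by parts, proving (\ref{Partial1}).

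The argument for (\ref{Partialn}) is identical in spirit but mirror-imaged: the variable $x_n$ enters only the rightmost factor $K^{\mathbf{e}_n}_{A_n}$, so the $\mathbf{e}_n$ released by differentiating, and the subsequent multiplication by $\mathbf{e}_n c_n u_n$, both land on the right of $\mathscr{L}^r_{\Lambda}(f)(\mathbf{u})$ — which is precisely why the operator in (\ref{Partialn}) acts from the right rather than the left. Here the right-operator notation is read as $\mathscr{L}^r_{\Lambda}(f)(\mathbf{u})(a_n\frac{\partial}{\partial u_n}-\mathbf{e}_n c_n u_n)=a_n\frac{\partial}{\partial u_n}\big[\mathscr{L}^r_{\Lambda}(f)(\mathbf{u})\big]-\mathscr{L}^r_{\Lambda}(f)(\mathbf{u})\,\mathbf{e}_n c_n u_n$, and the same determinant simplification finishes the proof.

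The main obstacle I anticipate is not the calculus but the noncommutative bookkeeping: one must verify that each $\mathbf{e}_k$ is only ever moved past factors it genuinely commutes with — its own kernel $K^{\mathbf{e}_k}_{A_k}$, the real-valued $f$, and the real scalar coefficients — and never past a foreign kernel $K^{\mathbf{e}_j}_{A_j}$ with $j\neq k$, since $\mathbf{e}_k\mathbf{e}_j=-\mathbf{e}_j\mathbf{e}_k$. This is exactly what pins the Clifford unit to the left in (\ref{Partial1}) and to the right in (\ref{Partialn}), and it uses the hypothesis $f\in L^1(\mathbb{R}^n,\mathbb{R})$ being real-valued in an essential way; the justification of the vanishing boundary term under the stated $L^1$ assumptions is the only genuinely analytic point.
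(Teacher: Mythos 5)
Your proposal is correct and follows essentially the same route as the paper's proof: integrate by parts in the differentiated variable, use that $\partial_{x_1}K^{\mathbf{e}_1}_{A_1}$ and $\partial_{u_1}K^{\mathbf{e}_1}_{A_1}$ are scalar multiples of $\mathbf{e}_1 K^{\mathbf{e}_1}_{A_1}$, and match coefficients via $a_kd_k-b_kc_k=1$, with the real-valuedness of $f$ licensing the commutation of $\mathbf{e}_k$ past $f$. In fact you are somewhat more careful than the paper, which drops the boundary term without justification and omits the $x_n$ case as ``similar,'' whereas you justify the decay of $f$ at infinity and spell out why the operator in \eqref{Partialn} must act from the right.
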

\begin{proof}
Since  $\frac{\partial f(\mathbf{x})}{\partial x_1}\in L^1(\mathbb{R}^n,\mathbb{R})$, it has CLCT as following
\begin{equation*}
\begin{aligned}
\mathscr{L}^r_{\Lambda}(\frac{\partial f(\mathbf{x})}{\partial x_1})(\mathbf{u})&=\int_{\mathbb{R}^n}\frac{\partial f(\mathbf{x})}{\partial x_1}\prod\limits_{k=1}^{n} K^{\mathbf{e}_k}_{A_k}(x_k,u_k)d^n\mathbf{x}
\\
&=\int_{\mathbb{R}^{n-1}}\bigg[\int_\mathbb{R}\frac{\partial f(\mathbf{x})}{\partial x_1}K^{\mathbf{e}_1}_{A_1}(x_1,u_1)dx_1\bigg]\prod\limits_{k=2}^{n} K^{\mathbf{e}_k}_{A_k}(x_k,u_k)d^{n-1}\mathbf{x},
\end{aligned}
\end{equation*}
where
\begin{equation*}
\begin{aligned}
\int_\mathbb{R}\frac{\partial f(\mathbf{x})}{\partial x_1}&K^{\mathbf{e}_1}_{A_1}(x_1,u_1)dx_1=\big[f(\mathbf{x})K^{\mathbf{e}_1}_{A_1}(x_1,u_1)\big]\big|_{-\infty}^{+\infty}-\int_\mathbb{R} f(\mathbf{x})\frac{\partial}{\partial x_1}K^{\mathbf{e}_1}_{A_1}(x_1,u_1)dx_1
\\
&=-\int_\mathbb{R} f(\mathbf{x})\frac{1}{\sqrt{\mathbf{e}_k2\pi b_1}}e^{\mathbf{e}_1(\frac{a_1}{2b_1}x_1^2-\frac{1}{b_1}x_1u_1+\frac{d_1}{2b_1}u_1^2)}\mathbf{e}_1(\frac{a_1}{b_1}x_1-\frac{u_1}{b_1})dx_1
\\
&=\int_\mathbb{R}(\frac{u_1}{b_1}-\frac{a_1}{b_1}x_1)f(\mathbf{x})\mathbf{e}_1K^{\mathbf{e}_1}_{A_1}(x_1,u_1)dx_1.
\end{aligned}
\end{equation*}
Hence
\begin{equation}\label{PartialD}
\mathscr{L}^r_{\Lambda}(\frac{\partial f(\mathbf{x})}{\partial x_n})(\mathbf{u})=\int_{\mathbb{R}^n}(\frac{u_1}{b_1}-\frac{a_1}{b_1}x_1)f(\mathbf{x})\mathbf{e}_1\prod\limits_{k=1}^{n} K^{\mathbf{e}_k}_{A_k}(x_k,u_k)d^n\mathbf{x}.
\end{equation}

While
\begin{equation*}
\begin{aligned}
a_1\frac{\partial}{\partial u_1}\mathscr{L}^r_{\Lambda}(f)(\mathbf{u})&=a_1\frac{\partial}{\partial u_1}\int_{\mathbb{R}^n}f(\mathbf{x})\prod\limits_{k=1}^{n} K^{\mathbf{e}_k}_{A_k}(x_k,u_k)d^n\mathbf{x}
\\
&=\int_{\mathbb{R}^n}(\frac{a_1d_1}{b_1}u_1-\frac{a_1x_1}{b_1})f(\mathbf{x})\mathbf{e}_1\prod\limits_{k=1}^{n} K^{\mathbf{e}_k}_{A_k}(x_k,u_k)d^n\mathbf{x},
\end{aligned}
\end{equation*}
and
\begin{equation*}
\begin{aligned}
\mathbf{e}_1c_1u_1\mathscr{L}^r_{\Lambda}(f)(\mathbf{u})=\int_{\mathbb{R}^n}c_1u_1\mathbf{e}_1 f(\mathbf{x})\prod\limits_{k=1}^{n} K^{\mathbf{e}_k}_{A_k}(x_k,u_k)d^n\mathbf{x}.
\end{aligned}
\end{equation*}
According to equation (\ref{PartialD}), We can easily verified that $\mathscr{L}^r_{\Lambda}(\frac{\partial f(\mathbf{x})}{\partial x_1})(\mathbf{u})=(a_1\frac{\partial}{\partial u_1}-\mathbf{e}_1c_1u_1)\mathscr{L}^r_{\Lambda}(f)(\mathbf{u})$, where the property $a_1d_1-b_1c_1=1$ is used. The proof of equation (\ref{Partialn}) is similar to that of equation (\ref{Partial1}), and we will omit it here.
\end{proof}


\begin{thm}(Plancherel) Suppose $f(\mathbf{x})$ and $g(\mathbf{x}) \in L^1\bigcap L^2(\mathbb{R}^n,Cl(0,n))$, $F(\mathbf{u}):=\mathscr{L}^r_{\Lambda}(f)(\mathbf{u})$ and $G(\mathbf{u}):=\mathscr{L}^r_{\Lambda}(g)(\mathbf{u})$ are their CLCTs respectively, then we have
\begin{equation}\label{innerp1}
(f,g)_{L^2(\mathbb{R}^n,Cl(0,n))}=(F,G)_{L^2(\mathbb{R}^n,Cl(0,n))}.
\end{equation}
Furthermore, when $f=g$, they turn to
\begin{equation}\label{parseval1}
(f,f)_{L^2(\mathbb{R}^n,Cl(0,n))}=(F,F)_{L^2(\mathbb{R}^n,Cl(0,n))}.
\end{equation}
\end{thm}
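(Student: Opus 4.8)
The plan is to start from the right-hand side $(F,G)_{L^2(\mathbb{R}^n,Cl(0,n))}$, expand both CLCTs by the definition \eqref{def-rCLCT}, perform the $\mathbf{u}$-integration first, and reduce it to the reproducing identity already used in the Example \eqref{Delta}.

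First I would record how Clifford conjugation acts on the kernel. Each factor $K^{\mathbf{e}_k}_{A_k}(x_k,u_k)$ lives in the commutative subalgebra $\mathrm{span}_{\mathbb{R}}\{1,\mathbf{e}_k\}$, in which $\mathbf{e}_k$ behaves as an imaginary unit, so conjugation merely sends $\mathbf{e}_k\mapsto-\mathbf{e}_k$; in particular $\overline{\sqrt{\mathbf{e}_k2\pi b_k}}=\sqrt{-\mathbf{e}_k2\pi b_k}$, whence $\overline{K^{\mathbf{e}_k}_{A_k}(y_k,u_k)}=K^{-\mathbf{e}_k}_{A_k}(y_k,u_k)$. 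Because the Clifford conjugate reverses products, $\overline{G(\mathbf{u})}=\int_{\mathbb{R}^n}\prod_{k=n}^{1}K^{-\mathbf{e}_k}_{A_k}(y_k,u_k)\,\overline{g(\mathbf{y})}\,d^n\mathbf{y}$, where the factors now run in the reversed order $k=n,n-1,\dots,1$. This reversal is the crucial bookkeeping point: when $F(\mathbf{u})\overline{G(\mathbf{u})}$ is written out, the two blocks of kernels meet so that the $\mathbf{e}_n$-factors $K^{\mathbf{e}_n}_{A_n}(x_n,u_n)$ and $K^{-\mathbf{e}_n}_{A_n}(y_n,u_n)$ sit adjacent in the middle of the product, with $f(\mathbf{x})$ on the far left and $\overline{g(\mathbf{y})}$ on the far right.

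Next I would invoke Fubini, justified by the $L^1\cap L^2$ hypotheses, to carry out the $\mathbf{u}$-integration first, since neither $f(\mathbf{x})$ nor $\overline{g(\mathbf{y})}$ depends on $\mathbf{u}$. The core computation is the single-variable identity
\[
\int_{\mathbb{R}}K^{\mathbf{e}_k}_{A_k}(x_k,u_k)K^{-\mathbf{e}_k}_{A_k}(y_k,u_k)\,du_k=\delta(x_k-y_k),
\]
proved exactly as in \eqref{Delta}, only with the roles of the signal and transform variables interchanged: the $\tfrac{d_k}{2b_k}u_k^2$ terms cancel, the two normalising constants multiply to $\tfrac{1}{2\pi|b_k|}$ because $\mathbf{e}_k(-\mathbf{e}_k)=1$, and the surviving integral collapses to $2\pi|b_k|\,\delta(x_k-y_k)$ via $\widehat{1}(\omega)=2\pi\delta(\omega)$. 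Since $\delta(x_n-y_n)$ is a scalar it commutes with every Clifford factor, so it pulls out and the next pair $K^{\mathbf{e}_{n-1}}_{A_{n-1}}K^{-\mathbf{e}_{n-1}}_{A_{n-1}}$ becomes adjacent; iterating over $k=n,n-1,\dots,1$ collapses the entire $\mathbf{u}$-integral to $\prod_{k=1}^{n}\delta(x_k-y_k)$.

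Finally, substituting this product of deltas and integrating in $\mathbf{y}$ forces $\mathbf{y}=\mathbf{x}$, leaving $\int_{\mathbb{R}^n}f(\mathbf{x})\overline{g(\mathbf{x})}\,d^n\mathbf{x}=(f,g)_{L^2(\mathbb{R}^n,Cl(0,n))}$, which is \eqref{innerp1}; setting $g=f$ yields \eqref{parseval1}. I expect the main obstacle to be the non-commutative bookkeeping rather than the analysis: one must track the reversed ordering produced by conjugation and check at each stage that the factor being integrated is genuinely adjacent to its partner and that the emerging delta is scalar, so that the peeling-off is legitimate. A secondary technical point is justifying the interchange of integrations together with the delta-function manipulations, which can be made rigorous by a standard density/approximation argument exploiting the $L^1\cap L^2$ assumption.
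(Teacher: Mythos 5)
Your proposal is correct and takes essentially the same route as the paper's own proof: expand $(F,G)$ via the CLCT definition, use Clifford conjugation (product reversal together with $\mathbf{e}_k\mapsto-\mathbf{e}_k$ on each kernel) to put $\overline{G(\mathbf{u})}$ in reversed kernel order, apply Fubini to integrate in $\mathbf{u}$ first, and collapse the paired middle kernels one at a time into scalar deltas exactly as in the computation of equation (\ref{Delta}), ending with $\prod_{k=1}^{n}\delta(x_k-y_k)$ and hence $(f,g)$. The Parseval case $f=g$ follows just as in the paper.
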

\begin{proof} According to equation (\ref{innerp}),
\begin{equation}\label{Planchel1}
(F,G)_{L^2(\mathbb{R}^n,Cl(0,n))}=\int_{\mathbb{R}^n}F(\mathbf{u})\overline{G(\mathbf{u})}d^n\mathbf{u}.
\end{equation}
Inputting the integral formulaes of $F(\mathbf{u})$ and $G(\mathbf{u})$ into the above equation, one can derive
\begin{equation*}
\begin{aligned}
(F,G)&=\int_{\mathbb{R}^n}\bigg[\int_{\mathbb{R}^n}f(\mathbf{x})\prod\limits_{k=1}^{n} K^{\mathbf{e}_k}_{A_k}(x_k,u_k)d^n\mathbf{x}\bigg]\bigg[\overline{\int_{\mathbb{R}^n}g(\mathbf{y})\prod\limits_{k=1}^{n} K^{\mathbf{e}_k}_{A_k}(y_k,u_k)d^n\mathbf{y}}\bigg]d^n\mathbf{u}
\\
&=\int_{\mathbb{R}^n}\bigg[\int_{\mathbb{R}^{2n}}f(\mathbf{x})\prod\limits_{k=1}^{n} K^{\mathbf{e}_k}_{A_k}(x_k,u_k)\prod\limits_{k=0}^{n-1} K^{-\mathbf{e}_{n-k}}_{A_{n-k}}(y_{n-k},u_{n-k})\overline{g(\mathbf{y})}d^n\mathbf{x}d^n\mathbf{y}\bigg]d^n\mathbf{u}
\\
&=\int_{\mathbb{R}^{2n}}f(\mathbf{x})\bigg[\int_{\mathbb{R}^n}\prod\limits_{k=1}^{n} K^{\mathbf{e}_k}_{A_k}(x_k,u_k)\prod\limits_{k=0}^{n-1} K^{-\mathbf{e}_{n-k}}_{A_{n-k}}(y_{n-k},u_{n-k})d^n\mathbf{u}\bigg]\overline{g(\mathbf{y})}d^n\mathbf{x}d^n\mathbf{y}.
\end{aligned}
\end{equation*}
The interchanging of the integral in the above equation is due to the Fubini theorem which is insured by $f(\mathbf{x})$ and $g(\mathbf{x}) \in L^2(\mathbb{R}^n,Cl(0,n))$.
Furthermore, the integral in bracket can be turned into
\begin{equation*}
\begin{aligned}
\int_{\mathbb{R}^n}&\prod\limits_{k=1}^{n} K^{\mathbf{e}_k}_{A_k}(x_k,u_k)\prod\limits_{k=0}^{n-1} K^{-\mathbf{e}_{n-k}}_{A_{n-k}}(y_{n-k},u_{n-k})d^n\mathbf{u}=\int_{\mathbb{R}^{n-1}}\prod\limits_{k=1}^{n-1}K^{\mathbf{e}_k}_{A_k}(x_k,u_k)
\\
&\bigg[\int_{\mathbb{R}}K^{\mathbf{e}_n}_{A_n}(x_n,u_n)K^{-\mathbf{e}_n}_{A_n}(y_n,u_n)du_n\bigg]\prod\limits_{k=1}^{n-1} K^{-\mathbf{e}_{n-k}}_{A_{n-k}}(y_{n-k},u_{n-k})d^{n-1}\mathbf{u},
\end{aligned}
\end{equation*}
where
$\int_{\mathbb{R}}K^{\mathbf{e}_n}_{A_n}(x_n,u_n)K^{-\mathbf{e}_n}_{A_n}(y_n,u_n)du_n$
can be calculated as equation (\ref{Delta}) and equals $\delta (x_n-y_n)$.
Hence, the above equation is
\begin{equation*}
\int_{\mathbb{R}^n}\prod\limits_{k=1}^{n} K^{\mathbf{e}_k}_{A_k}(x_k,u_k)\prod\limits_{k=0}^{n-1} K^{-\mathbf{e}_{n-k}}_{A_{n-k}}(y_{n-k},u_{n-k})d^n\mathbf{u}=\prod\limits_{k=1}^n\delta (x_k-y_k),
\end{equation*}
and $(F,G)$ turns to
\begin{equation*}
\begin{aligned}
(F,G)&=\int_{\mathbb{R}^n}f(\mathbf{x})\bigg[\int_{\mathbb{R}^n}\prod\limits_{k=1}^n\delta (x_k-y_k)\overline{g(\mathbf{y})}d^n\mathbf{y}\bigg]d^n\mathbf{x}
\\
&=\int_{\mathbb{R}^n}f(\mathbf{x})\overline{g(\mathbf{x})}d^n\mathbf{x}=(f,g).
\end{aligned}
\end{equation*}
Let $f=g$, the Parseval theorem (\ref{parseval1}) is derived.
\end{proof}


\begin{thm}\label{inverset}(Inverse Theorem) Let $\Lambda$: $A_k=\left(
\begin{array}{cc}
a_k & b_k \\
c_k & d_k \\
\end{array}
\right)$
($k=1,2,\cdots,n$), suppose  $f(\mathbf{x})\in L^1(\mathbb{R}^n,Cl(0,n))$. Then the CLCT of $f$, $F(\mathbf{u}):=\mathscr{L}^r_{\Lambda}(f)(\mathbf{u})$, is an invertible transform and its inverse is
\begin{equation}\label{inversef}
f(\mathbf{x})=\mathscr{L}^{-1}_{\Lambda}(F)(\mathbf{x})=\int_{\mathbb{R}^n}F(\mathbf{u})\prod\limits_{k=0}^{n-1} K^{\mathbf{e}_{n-k}}_{{A^{-1}_{n-k}}}(u_{n-k}, x_{n-k})d^n\mathbf{u},
\end{equation}
where $A^{-1}_k=\left(
\begin{array}{cc}
d_k & -b_k \\
-c_k & a_k \\
\end{array}
\right),$ ($k=1,2,\cdots,n$).
\end{thm}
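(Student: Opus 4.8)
The plan is to show that the operator in (\ref{inversef}) undoes $\mathscr{L}^r_{\Lambda}$ by substituting the definition of $F$ into its right-hand side, interchanging the order of integration, and collapsing the resulting kernel integral to a product of Dirac deltas, exactly in the spirit of the Plancherel proof above. The first step is purely algebraic: I would write out the stated inverse kernel $K^{\mathbf{e}_k}_{A^{-1}_k}(u_k,x_k)$ using the entries $d_k,-b_k,-c_k,a_k$ of $A^{-1}_k=\left(\begin{smallmatrix} d_k & -b_k\\ -c_k & a_k\end{smallmatrix}\right)$ and simplify. A direct comparison then shows
$$K^{\mathbf{e}_{n-k}}_{A^{-1}_{n-k}}(u_{n-k},x_{n-k})=K^{-\mathbf{e}_{n-k}}_{A_{n-k}}(x_{n-k},u_{n-k}),$$
so that (\ref{inversef}) is nothing but integration against the very conjugate kernels $\prod_{k=0}^{n-1}K^{-\mathbf{e}_{n-k}}_{A_{n-k}}$ already met in the Example and in the Plancherel theorem.

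Next, inserting $F(\mathbf{u})=\int_{\mathbb{R}^n}f(\mathbf{y})\prod_{k=1}^{n}K^{\mathbf{e}_k}_{A_k}(y_k,u_k)\,d^n\mathbf{y}$ into (\ref{inversef}), I note that $f(\mathbf{y})$ stands at the far left of every product, so pulling the $\mathbf{y}$-integral outside raises no commutativity issue, and Fubini (justified as in the Plancherel proof by $f\in L^1$) reduces the problem to evaluating
$$\int_{\mathbb{R}^n}\prod_{k=1}^{n}K^{\mathbf{e}_k}_{A_k}(y_k,u_k)\prod_{k=0}^{n-1}K^{-\mathbf{e}_{n-k}}_{A_{n-k}}(x_{n-k},u_{n-k})\,d^n\mathbf{u}.$$
The two opposite orderings are chosen precisely so that each matched pair $K^{\mathbf{e}_j}_{A_j}(y_j,u_j)\,K^{-\mathbf{e}_j}_{A_j}(x_j,u_j)$ sits adjacently, nested from $j=n$ outward. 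I would then integrate the variables from the inside out: only the two $\mathbf{e}_n$-kernels depend on $u_n$, and since both are powers of $e^{\mathbf{e}_n(\cdot)}$ they commute, so the computation (\ref{Delta}) applies verbatim and returns the scalar $\delta(x_n-y_n)$, which factors out past everything; iterating over $u_{n-1},\dots,u_1$ yields $\prod_{k=1}^{n}\delta(x_k-y_k)$.

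Finally, feeding this product of deltas back gives $\int_{\mathbb{R}^n}f(\mathbf{y})\prod_{k=1}^{n}\delta(x_k-y_k)\,d^n\mathbf{y}=f(\mathbf{x})$, which is the claim. I expect the only real obstacle to be the non-commutative bookkeeping: one must check that the reversed ordering in (\ref{inversef}) is exactly what makes the conjugate kernels adjacent to their partners, so that each $u_j$-integration isolates a single pair and produces a genuinely \emph{scalar} delta that commutes out of the remaining product. Were the orderings not mirror images, anticommuting factors $\mathbf{e}_i\mathbf{e}_j=-\mathbf{e}_j\mathbf{e}_i$ would sit between the partners and obstruct the peeling; once this adjacency is confirmed, the rest is a direct transcription of the delta-kernel identity (\ref{Delta}).
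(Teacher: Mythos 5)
Your proposal is correct and follows essentially the same route as the paper's own proof: substitute the defining integral of $F$ into the right-hand side of (\ref{inversef}), interchange integration by Fubini, and collapse the inner kernel integral to $\prod_{k=1}^{n}\delta(x_k-y_k)$ via the computation (\ref{Delta}). Your explicit identification $K^{\mathbf{e}_k}_{A^{-1}_k}(u_k,x_k)=K^{-\mathbf{e}_k}_{A_k}(x_k,u_k)$ and the inside-out peeling argument for why the mirrored ordering keeps each conjugate pair adjacent are just a more careful spelling-out of what the paper compresses into the phrase ``similar to equation (\ref{Delta})''.
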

\begin{proof}
$f(\mathbf{x})\in L^1(\mathbb{R}^n,Cl(0,n))$, by straightforward computation, one can find
\begin{equation}\label{inversef1}
\begin{aligned}
\int_{\mathbb{R}^n}F(\mathbf{u})\prod\limits_{k=0}^{n-1} K^{\mathbf{e}_{n-k}}_{{A^{-1}_{n-k}}}(u_{n-k}, x_{n-k})d^n\mathbf{u}=\int_{\mathbb{R}^n}\bigg[\int_{\mathbb{R}^n}f(\mathbf{y})\prod\limits_{k=1}^{n} K^{\mathbf{e}_{k}}_{A_k}(y_k, u_k)d^n\mathbf{y}\bigg]
\\
\prod\limits_{k=0}^{n-1} K^{\mathbf{e}_{n-k}}_{{A^{-1}_{n-k}}}(u_{n-k},x_{n-k})d^n\mathbf{u}
\\
=\int_{\mathbb{R}^{2n}}f(\mathbf{y})\prod\limits_{k=1}^{n} K^{\mathbf{e}_{k}}_{A_k}(y_k, u_k)d^n\mathbf{y}\prod\limits_{k=0}^{n-1} K^{\mathbf{e}_{n-k}}_{{A^{-1}_{n-k}}}(u_{n-k},x_{n-k})d^n\mathbf{y}d^n\mathbf{u},
\\
=\int_{\mathbb{R}^n}f(\mathbf{y})\bigg[\int_{\mathbb{R}^n}\prod\limits_{k=1}^{n} K^{\mathbf{e}_{k}}_{A_k}(y_k, u_k)
\prod\limits_{k=0}^{n-1} K^{\mathbf{e}_{n-k}}_{{A^{-1}_{n-k}}}(u_{n-k},x_{n-k})d^n\mathbf{u}\bigg]d^n\mathbf{y}
\end{aligned}
\end{equation}
where $K^{\mathbf{e}_k}_{{A^{-1}_k}}(u_k,x_k)=\frac{1}{\sqrt{\mathbf{e}_k2\pi b_k}}e^{\mathbf{e}_k(-\frac{a_k}{2b_k}x_k^2+\frac{1}{b_k}x_ku_k-\frac{d_k}{2b_k}u_k^2)}$,
similar to equation (\ref{Delta}), the integral in bracket of above equation is equal to $\prod\limits_{k=1}^{n}\delta(x_k-y_k)$.
Thus equation (\ref{inversef1}) becomes
\begin{equation*}
\int_{\mathbb{R}^n}F(\mathbf{u})\prod\limits_{k=0}^{n-1} K^{\mathbf{e}_{n-k}}_{{A^{-1}_{n-k}}}(u_{n-k}, x_{n-k})d^n\mathbf{u}=\int_{\mathbb{R}^n}f(\mathbf{y})\prod\limits_{k=1}^{n}\delta(x_k-y_k)d^n\mathbf{y}=f(\mathbf{x}),
\end{equation*}
which completes the proof.
\end{proof}

\section{Analytic Signal in CLCT Domain and Envelop Detection}
\subsection{Analytic Signal in CLCT Domain}
Following the approach of generalizing the analytic signal to LCT domain, which was introduced by Fu and Li \cite{Fu08} and extended to quaternion algebra by Kou \cite{Kou16}, we supply the definition of generalized analytic signal which is associated with CLCT.

\begin{defn}Given a Clifford valued function $f(\mathbf{x})$ with $\mathbf{x}=(x_1,x_2,\cdots,x_n)$, its corresponding analytic signal $f_A(\mathbf{x})$ is defined as
\begin{equation}
f_A(\mathbf{x})=\int_{\mathbb{R}^n}F_A(\mathbf{u})\prod\limits_{k=0}^{n-1} K^{\mathbf{e}_{n-k}}_{{A^{-1}_{n-k}}}(u_{n-k}, x_{n-k})d^n\mathbf{u},
\end{equation}
where $F_A(\mathbf{u})$ is
\begin{equation}
F_A(\mathbf{u})=\prod\limits_{k=1}^{n}[1+sign(\frac{u_k}{b_k})]F(\mathbf{u}),
\end{equation}
and $F(\mathbf{u}):=\mathscr{L}^r_{\Lambda}(f)(\mathbf{u})$ is the CLCT of original function $f(\mathbf{x})$, $\Lambda:$ $A_k=\left(
\begin{array}{cc}
a_k & b_k \\
c_k & d_k \\
\end{array}
\right)\in \mathbb{R}^{2\times2}$
($k=1,2,\cdots,n$) be a set of parameter matrices  with $det(A_k)=1$ and $b_k\neq0$.
\end{defn}
When the dimension $n=1$, the analytic signal $f_A(t)$ is just what Fu introduced in \cite{Fu08}. While $n=2$, this analytic signal $f_A(x,y)$ is different from Kou's generalized quaternionic analytic signal, which is associated with two-sided QLCT.

Since we intend to investigate the envelop detection problems of 3-D images, we will focus on $n=3$ case. Due to the advantages of Clifford algebra $Cl(0,3)$, we will adopt this special Clifford algebra in the following part.

\begin{defn}
Let $\Lambda$: $A_k=\left(
\begin{array}{cc}
a_k & b_k \\
c_k & d_k \\
\end{array}
\right)\in \mathbb{R}^{2\times2}$
($k=1,2,3$) be a set of parameter matrices  with $det(A_k)=1$ and $b_k\neq0$, the right-sided CLCT of a signal $f\in L^1(\mathbb{R}^3,Cl(0,3))$ is a Clifford valued function $\mathscr{L}^r_{\Lambda}(f): \mathbb{R}^3\rightarrow Cl(0,3)$ defined as follows
\begin{equation}
\mathscr{L}^r_{\Lambda}(f)(\mathbf{u}):=\int_{\mathbb{R}^3}f(\mathbf{x}) K^{\mathbf{e}_1}_{A_1}(x_1,u_1)K^{\mathbf{e}_2}_{A_2}(x_2,u_2)K^{\mathbf{e}_3}_{A_3}(x_3,u_3)d^3\mathbf{x},
\end{equation}
where $K^{\mathbf{e}_1}_{A_1}(x_1,u_1)$, $K^{\mathbf{e}_2}_{A_2}(x_2,u_2)$ and $K^{\mathbf{e}_3}_{A_3}(x_3,u_3)$ are kernels of the CLCT and defined by
\begin{equation*}
K^{\mathbf{e}_k}_{A_k}(x_k,u_k):=\frac{1}{\sqrt{\mathbf{e}_k2\pi b_k}}e^{\mathbf{e}_k(\frac{a_k}{2b_k}x_k^2-\frac{1}{b_k}x_ku_k+\frac{d_k}{2b_k}u_k^2)}.
\end{equation*}
\end{defn}

\begin{defn}\label{analyticC}
Given a Clifford biquaternion valued function $f(\mathbf{x})$ with $\mathbf{x}=(x_1,x_2,x_3)$, its corresponding Clifford biquaternion analytic signal $f_A(\mathbf{x})$ is defined as
\begin{equation}
f_A(\mathbf{x})=\int_{\mathbb{R}^3}F_A(\mathbf{u})K^{\mathbf{e}_3}_{{A^{-1}_{3}}}(u_3, x_3)K^{\mathbf{e}_2}_{{A^{-1}_{2}}}(u_2, x_2)K^{\mathbf{e}_1}_{{A^{-1}_{1}}}(u_1, x_1)du_1du_2du_3,
\end{equation}
where $F_A(\mathbf{u})$ is
\begin{equation}
F_A(\mathbf{u})=[1+sign(\frac{u_1}{b_1})][1+sign(\frac{u_2}{b_2})][1+sign(\frac{u_3}{b_3})]F(\mathbf{u}),
\end{equation}
and $F(\mathbf{u}):=\mathscr{L}^r_{\Lambda}(f)(\mathbf{u})$ is the CLCT of original function $f(\mathbf{x})$, $\Lambda:$ $A_k=\left(
\begin{array}{cc}
a_k & b_k \\
c_k & d_k \\
\end{array}
\right)\in \mathbb{R}^{2\times2}$
($k=1,2,3$) be a set of parameter matrices with $det(A_k)=1$ and $b_k\neq0$.
\end{defn}

The analytic signal $f_A(\mathbf{x})$ defined by Definition \ref{analyticC} is a  Clifford biquaternion valued function, which can be written as
\begin{equation*}
f_A(\mathbf{x})=(p_0+\mathbf{i}p_1+\mathbf{j}p_2+\mathbf{k}p_3)+\epsilon(q_0+\mathbf{i}q_1+\mathbf{j}q_2+\mathbf{k}q_3).
\end{equation*}
From this function, one can derive three quaternions:
\begin{equation*}
f_{A_{\mathbf{e}_1\mathbf{e}_2}}=p_0+q_1\mathbf{e}_1+q_2\mathbf{e}_2+p_3\mathbf{e}_1\mathbf{e}_2,
\end{equation*}
\begin{equation*}
f_{A_{\mathbf{e}_2\mathbf{e}_3}}=p_0+q_2\mathbf{e}_2+q_3\mathbf{e}_3+p_1\mathbf{e}_2\mathbf{e}_3,
\end{equation*}
and
\begin{equation*}
f_{A_{\mathbf{e}_3\mathbf{e}_1}}=p_0+q_3\mathbf{e}_3+q_1\mathbf{e}_1+p_2\mathbf{e}_3\mathbf{e}_1.
\end{equation*}

For each of this quaternionic valued signals, one can write them into polar form representations, and obtain their modules which named the partial modules of the original Clifford biquaternion valued analytic signal $f_A(\mathbf{x})$ as following
\begin{equation*}
mod_{\mathbf{e}_1\mathbf{e}_2}=\sqrt{f_{A_{\mathbf{e}_1\mathbf{e}_2}}\overline{(f_{A_{\mathbf{e}_1\mathbf{e}_2}})}},
\end{equation*}
\begin{equation*}
mod_{\mathbf{e}_2\mathbf{e}_3}=\sqrt{f_{A_{\mathbf{e}_2\mathbf{e}_3}}\overline{(f_{A_{\mathbf{e}_2\mathbf{e}_3}})}},
\end{equation*}
\begin{equation*}
mod_{\mathbf{e}_3\mathbf{e}_1}=\sqrt{f_{A_{\mathbf{e}_3\mathbf{e}_1}}\overline{(f_{A_{\mathbf{e}_3\mathbf{e}_1}})}}.
\end{equation*}

\subsection{Envelop Detection of 3-D Images}
In \cite{Wang12}, Wang constructed a experiment platform for acquiring radio frequency (RF) ultrasound volume with a biopsy needle in it. The volume is a $128\times1280\times33$ pixels in lateral ($x_1$ axes), elevation ($x_2$ axes) and axial ($x_3$ axes) directions respectively. By
using classical 1-D analytic signal and novel 3-D analytic signal approaches, different resolution appears in ultrasound image envelop detection. Since 3-D analytic signal takes into account the information of the neighbouring scan lines, this novel approaches supplies better result than classical 1-D analytic signal approach. In this paper, we will take the same experiment platform and show the shortcoming of the analytic signal associated with CFT. We also propose the analytic signal associated with CLCT in Clifford biquaternion domain to this problem and show that by modifying the matrix parameters, one can obtain a satisfactory result.

\begin{figure}
\begin{minipage}[t]{0.33\linewidth}
\centering
\includegraphics[width=2in]{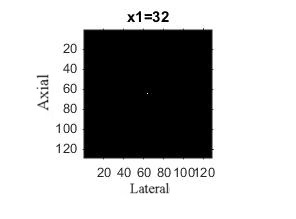}
\end{minipage}%
\begin{minipage}[t]{0.33\linewidth}
\centering
\includegraphics[width=2in]{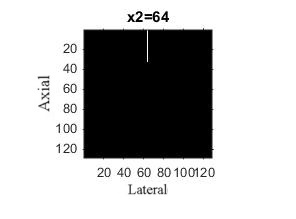}
\end{minipage}
\begin{minipage}[t]{0.33\linewidth}
\centering
\includegraphics[width=2in]{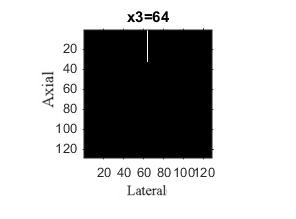}
\end{minipage}
\caption{original envelop of 3-D image.}
\end{figure}

Here we modify the volume to a $128\times128\times128$ pixels, and in the plane of $x_3=64$ there is a biopsy needle located at $x_2=64$ with $0\leq x_1\leq32$, which  can be easily found in Figure 1. The flow-process diagram of using the amplitude method of analytic signals is supplied in Figure 2.

\begin{figure}
\begin{tikzpicture}[node distance=1.0cm]
 \centering
\node[startstop](start){Inputting original data $f(x_1,x_2,x_3)$};
\node[process, below of = start, yshift = -0.5cm](pro1){Obtain analytic signal $f_A(x_1,x_2,x_3)$};
\node[process, below of = pro1, yshift = -0.5cm](pro2){Derive the polar form};
\node[process, below of = pro2, yshift = -0.5cm](pro3){Obtain the modulus $|f_A(x_1,x_2,x_3)|$};
\node[process, below of = pro3, yshift = -0.5cm](stop){draw the picture of $|f_A(x_1,x_2,x_3)|$};
\coordinate (point1) at (-3cm, -6cm);
\draw [arrow] (start)--(pro1);
\draw [arrow] (pro1)--(pro2);
\draw [arrow] (pro2)--(pro3);
\draw [arrow] (pro3)--(stop);
\end{tikzpicture}
\caption{ The flow-process diagram of using analytic signal's amplitude to envelop detection.}
 \end{figure}
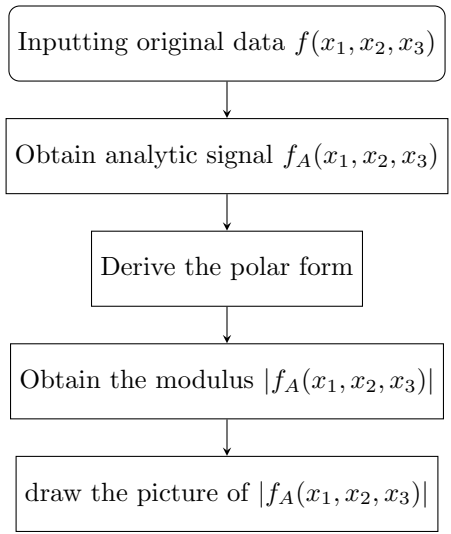

 Figure 3 shows the result by using Wang's analytic signal to detect the envelop of biopsy needle which locates at plane of $x_3=64$. Thank to take into account the information of the neighbouring scan lines, one can find the biopsy needle in the nearest planes besides in plane $x_3=64$, which are planes $x_3=63$ and $x_3=65$. Since it will induce many noise and lose information in converting RF ultrasound signal to the Brightness mode (B-mode) signal, this property will insure one can find a clear envelop in B-mode images.  But unfortunately, the envelops in these three planes distort badly, which are much longer than the true needle. This shortcoming can be well overcame by our approach based on the analytic signal associated with CLCT, which can be found in Figure 4. The envelop also can be found in planes $x_3=63$,$x_3=64$ and $x_3=65$, and the length is very close to that of true needle. Here the parameter matrices are $A_1=\left(
\begin{array}{cc}
1 & 10 \\
-0.5 & 0.5 \\
\end{array}
\right)$, $A_2=\left(
\begin{array}{cc}
1 & 10 \\
-0.5 & 0.5 \\
\end{array}
\right)$ and $A_3=\left(
\begin{array}{cc}
1000 & 1 \\
-0.1 & 0.0009 \\
\end{array}
\right)$, and they are found by two steps: first, to the 2-D image, plane $x_3=64$, we try several times and get good result. Furthermore, we slightly modify the parameters of $A_3$ and find a better result. Since there are two many parameters in these three matrices, we can not insure these three matrices are the best parameters. Due to the fact, Wang's method is a special case of our approach, we can insure that our approach will supply a better result by choosing proper parameter matrices.

Furthermore, we also use the envelop method of monogenic signal, which was introduced by Yang \cite{Yang17} and is another higher dimensional generalization of analytic signal, to detect the envelop of this image. Figure 4 shows that it supplies an accurate envelop in every planes but one can find this envelop in every $x_3$ plane, which means that the image will be contaminated by the background signal badly.

\begin{figure}
\begin{minipage}[t]{0.33\linewidth}
\centering
\includegraphics[width=2in]{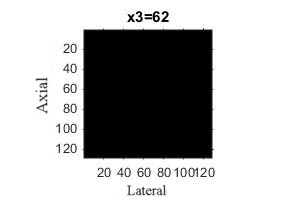}
\end{minipage}%
\begin{minipage}[t]{0.33\linewidth}
\centering
\includegraphics[width=2in]{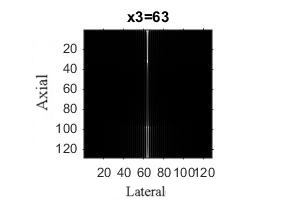}
\end{minipage}
\begin{minipage}[t]{0.33\linewidth}
\centering
\includegraphics[width=2in]{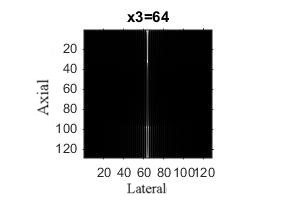}
\end{minipage}
\begin{minipage}[t]{0.33\linewidth}
\centering
\includegraphics[width=2in]{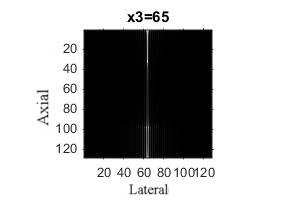}
\end{minipage}
\begin{minipage}[t]{0.33\linewidth}
\centering
\includegraphics[width=2in]{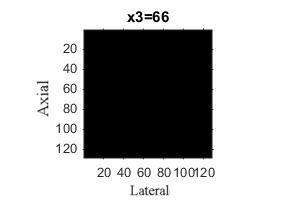}
\end{minipage}
\caption{Wang's analytic signal associated with CFT \cite{Wang12}.}
\end{figure}

\begin{figure}
\begin{minipage}[t]{0.33\linewidth}
\centering
\includegraphics[width=2in]{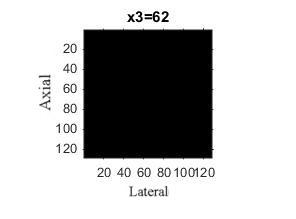}
\end{minipage}%
\begin{minipage}[t]{0.33\linewidth}
\centering
\includegraphics[width=2in]{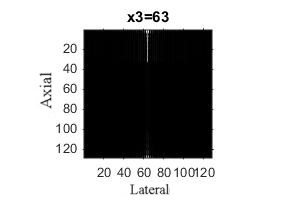}
\end{minipage}
\begin{minipage}[t]{0.33\linewidth}
\centering
\includegraphics[width=2in]{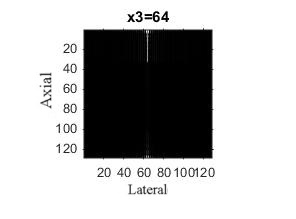}
\end{minipage}
\begin{minipage}[t]{0.33\linewidth}
\centering
\includegraphics[width=2in]{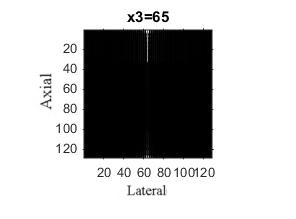}
\end{minipage}
\begin{minipage}[t]{0.33\linewidth}
\centering
\includegraphics[width=2in]{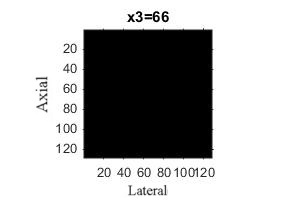}
\end{minipage}
\caption{This paper's method: analytic signal associated with CLCT, where\qquad \qquad $A_1=(1,10;-0.05,0.5)$,
$A_2=(1,10;-0.05,0.5)$ and $A_3=(1000,1;-0.1,0.0009)$.}
\end{figure}

\begin{figure}
\begin{minipage}[t]{0.33\linewidth}
\centering
\includegraphics[width=2in]{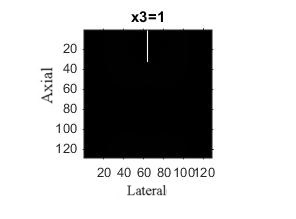}
\end{minipage}%
\begin{minipage}[t]{0.33\linewidth}
\centering
\includegraphics[width=2in]{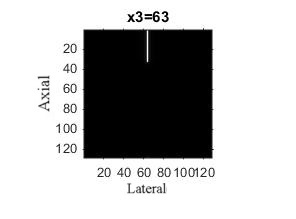}
\end{minipage}
\begin{minipage}[t]{0.33\linewidth}
\centering
\includegraphics[width=2in]{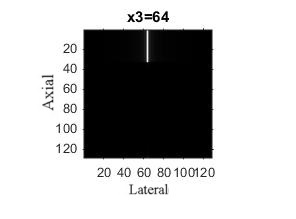}
\end{minipage}
\begin{minipage}[t]{0.33\linewidth}
\centering
\includegraphics[width=2in]{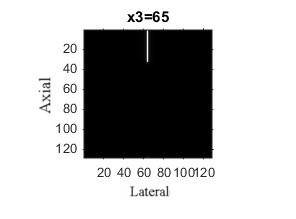}
\end{minipage}
\begin{minipage}[t]{0.33\linewidth}
\centering
\includegraphics[width=2in]{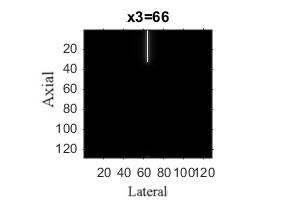}
\end{minipage}
\begin{minipage}[t]{0.3\linewidth}
\centering
\includegraphics[width=2in]{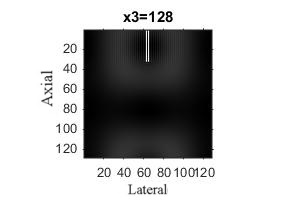}
\end{minipage}
\caption{Monogenic signal approach of \cite{Yang17}.}
\end{figure}

\section{Conclusion}

Inspired by the success of 3-D biquaternionic analytic signal in detecting 3-D ultrasound images envelop, we generalize this approach to LCT domain. Thanks to the special structure of the Clifford biquaternion algebra, this generalized analytic signal has three partial modules where $mod_{\mathbf{e}_1\mathbf{e}_2}(f_A)$ corresponds the envelop of 3-D images in $x_3$ planes. Real valued synthetic 3-D image is introduced to test the valid of this novel envelop detection method. Comparing with the classical 1-D analytic signal method and Wang's generalized biquaternionic analytic signal method even the envelop method of monogenic signal, our approach supplies a best result. Furthermore, our method can derive a better resolution by modifying the  parameter matrices and adding post processing such as average filter processing.


\subsection*{Acknowledgment}
The first author acknowledges financial support from the PhD research startup foundation of Hubei University of Technology No. BSQD2019052. Partial support by the Foundation for Science and Technology from Department of Education of Hubei province (B2019047).  Partial support by the Macao Science and Technology Development Fund 0085/2018/A2.

\end{document}